\def\sfrac#1#2{#1/#2}
\def\vfrac#1#2{(#1)/#2}
\def\afrac#1#2{#1/(#2)}
\newcommand{\rrvert}{\vert}
\newcommand{\rrVert}{\Vert}
\newcommand{\llvert}{\vert}
\newcommand{\llVert}{\Vert}
\renewcommand{\mid}{|}
\newcommand{\xrightarrow}[1]{\stackrel{#1}{-\!\!-\!\!\!\longrightarrow}}
\newcommand{\eqref}[1]{(\ref{#1})}
\newtheorem{teo}{Theorem}[section]
\newtheorem{cor}[teo]{Corollary}
\newtheorem{lem}[teo]{Lemma}
\newcommand{\Real}{\mathbb R}
\newcommand{\F}{\mathcal{F}}
\newcommand{\G}{\mathcal{G}}
\newcommand{\one}[1]{\mathbf{1}_{\{#1\}}}
\renewcommand{\P}{\mathbb{P}}
\newcommand{\Q}{\mathbb{Q}}
\newcommand{\E}{\mathbb{E}}
\newcommand{\sign}{\operatorname{sign}}
\begin{document}
\begin{frontmatter}

\title{Mixed Gaussian processes: A filtering approach}
\runtitle{Mixed Gaussian processes}

\begin{aug}
\author[A]{\fnms{Chunhao}~\snm{Cai}\ead[label=e1]{chunhao\_cai@nankai.edu.cn}},
\author[B]{\fnms{Pavel}~\snm{Chigansky}\thanksref{T2}\ead
[label=e2]{Pavel.Chigansky@mail.huji.ac.il}}
\and
\author[C]{\fnms{Marina}~\snm{Kleptsyna}\corref{}\thanksref
{T3}\ead[label=e3]{marina.kleptsyna@univ-lemans.fr}}
\runauthor{C. Cai, P. Chigansky and M. Kleptsyna}
\affiliation{Nankai University, The Hebrew University of Jerusalem and Universit\'{e} du Maine}
\address[A]{C. Cai\\
School of Mathematical Science\\
Nankai University\\
94 Weijin Road, Nankai\\
Tianjin 300071\\
China\\
\printead{e1}}
\address[B]{P. Chigansky\\
Department of Statistics\\
The Hebrew University of Jerusalem\\
Mount Scopus\\
Jerusalem 91905\\
Israel\\
\printead{e2}}
\address[C]{M. Kleptsyna\\
Laboratoire Manceau de Math\'{e}matiques\\
Universit\'{e} du Maine\\
Avenue Olivier Messiaen\\
72085 Le Mans\\
France\\
\printead{e3}}
\end{aug}
\thankstext{T2}{Supported by ISF 558/13 grant.}
\thankstext{T3}{Supported in part by ANR STOSYMAP.}

%
\received{\smonth{8} \syear{2014}}
%
\revised{\smonth{6} \syear{2015}}

%
\begin{abstract}
This paper presents a new approach to the analysis of mixed processes
\[
X_t = B_t + G_t, \qquad t\in[0,T],
\]
where $B_t$ is a Brownian motion and $G_t$ is an independent centered
Gaussian process.
We obtain a new canonical innovation representation of $X$, using
linear filtering theory.
When the kernel
\[
K(s,t) = \frac{\partial^2}{\partial s\,\partial t} \mathbb{E} G_t
G_s,\qquad s
\ne t %
\]
has a weak singularity on the diagonal, our results generalize the
classical innovation formulas beyond the square integrable setting.
For kernels with stronger singularity, our approach is applicable to
processes with additional ``fractional'' structure,
including the mixed fractional Brownian motion from mathematical finance.
We show how previously-known measure equivalence relations and
semimartingale properties follow from our canonical
representation in a unified way, and complement them with new formulas
for Radon--Nikodym densities.
\end{abstract}

\begin{keyword}[class=AMS]
\kwd[Primary ]{60G15}
\kwd[; secondary ]{60G22}
\kwd{60G30}
\kwd{60G35}
\end{keyword}
\begin{keyword}
\kwd{Gaussian processes}
\kwd{innovation representation}
\kwd{linear filtering}
\kwd{fractional processes}
\kwd{equivalence of measures}
\end{keyword}
\end{frontmatter}

\section{Introduction}\label{sec1}

In this paper, we present a new perspective on \textit{mixed} processes of
the form
%
\begin{equation}
\label{XBG} X_t = B_t + G_t, \qquad t
\in[0,T], T>0,
\end{equation}
where $B=(B_t)$ is a Brownian motion and $G=(G_t)$ is an independent
Gaussian process.
Such mixtures have been the subject of much research in the past, due
to their importance in
engineering applications (see, e.g., the survey \cite{KaiPoor98}),
and, more recently, have reemerged in
mathematical finance in the context of option pricing.

The renewed interest was triggered by Cheridito's paper \cite
{Ch01}, in which the author considered mixed fractional
Brownian motion (fBm)
%
\begin{equation}
\label{mfBm} X_t = B_t + B^H_t,
\qquad t\in[0,T],
\end{equation}
where $B^H=(B^H_t)$ is fBm with the Hurst exponent $H\in(0,1]$, that
is, the centered Gaussian process with
covariance function
%
\begin{equation}
\label{cov} R(s,t):=\E B^H_tB^H_s=
\tfrac{1} 2 \bigl( t^{2H}+ s^{2H}-\llvert t-s\rrvert
^{2H} \bigr), \qquad s,t\in[0,T].
\end{equation}
%

A curious change in properties of $X$ was shown to occur at $H=\frac{3} 4$,
where it became apparent that $X$ is a semimartingale
in its own filtration if, and only if, either $H=\frac{1} 2$ or $H\in
(\frac{3} 4, 1 ]$.
Moreover, in the latter case, the probability measure~$\mu^X$, induced
by $X$ on its paths space, is equivalent to the
Wiener measure $\mu^B$.

Since $B^H$ is not a semimartingale on its own, unless $H=\frac{1} 2$ or
$H=1$, this assertion means that $B^H$ can be ``regularized'' up to a
semimartingale
by adding to it an independent Brownian perturbation. In \cite{Ch01},
this fact is discussed in the context of arbitrage opportunities on
nonsemimartingale markets (see also \cite{Ch03}).
A~comprehensive survey of further related developments in finance can
be found in \cite{BSV07}.

Besides being of interest to the finance community, the result in \cite
{Ch01} also led to a number of elegant generalizations and alternative proofs
\cite{BN03,vZ07,vZ08}.
In addition, as pointed out in \cite{Ch03b}, the equivalence $\mu
^X\sim\mu^B$ follows from the general theory of Shepp \cite{Sh66},
Hitsuda \cite{Hi68} and Kailath \cite{Kai70}, which, moreover,
gives a formula for the density $d\mu^X/d\mu^B$.

A complementary result, obtained in \cite{BP88} (see Proposition 6.5)
and \cite{vZ07}, asserts that $\mu^X\sim\mu^{B^H}$ if, and only if,
$H<\frac{1} 4$. Both proofs are based on the spectral theory of
processes with stationary increments and the corresponding
density is given in \cite{vZ07} in terms of certain reproducing
kernels. However, as the author points out, a more explicit expression
might be hard to obtain using this method.

The main contribution of this paper is a novel approach to the analysis
of mixtures such as \eqref{XBG},
based on the filtering theory of Gaussian processes. The core of our
method is a new canonical innovation representation of $X$.
Our construction reveals a new interesting connection between the
probabilistic properties of $X$ and the structure
of solutions of integral equations with weakly singular kernels.

In the context of mixed fBm \eqref{mfBm}, all the aforementioned
properties can be deduced from this representation
in a unified manner for all values of $H$, due to an apposite choice of
the fundamental martingale. Moreover, it yields the missing density
${d\mu^X}/{d\mu^{B^H}}$ for $H<\frac{1} 4$,
as well as Girsanov-type formulas for the density of $\mu^X$ with
respect to measures induced by stochastic shifts of $X$.

The precise formulation of our results is given in the next section.
Section~\ref{sec-not} contains auxiliary results, including a relevant
theory of integral equations and frequently
used formulas from stochastic calculus with respect to fBm.
The proofs of the main theorems appear in Sections~\ref
{sec-thm1}--\ref{sec-thm4}, and in Section~\ref{sec-RL} we show how
our method applies to the
Riemann--Liouville process.

\section{The main results}

\subsection{A background}

Let us briefly recall the essential elements of the linear innovation
theory \cite{HH76,Roz77}.
A Gaussian process admits an \textit{innovation} representation if it can
be generated by a linear causal transformation of $N$
orthogonal processes with independent increments. Such a representation
is called \textit{canonical}, if the transformation is also causally invertible.
A well-known result of Hida \cite{Hida60} and Cramer \cite
{Cr64} asserts that under mild regularity conditions
any Gaussian process admits a canonical innovation representation.

Certain properties of canonical representations, collectively referred
to as \textit{type}, are the unique attributes of the process.
This includes the number $N$ of innovation components, called \textit
{multiplicity}, which can be finite or infinite.
For example, stationary processes have unit multiplicity, that is,
$N=1$, and the corresponding innovation representation can be found
by solving the spectral factorization problem.
Processes which induce equivalent measures on their paths space, have
the same innovation type (see \cite{KO73}).

Let us now review in greater detail the results 
directly relevant to the mixed processes of the form \eqref{XBG}.
A general criteria for equivalence, obtained by Shepp in \cite
{Sh66}, implies
that $\mu^X\sim\mu^B$ if and only if
%
\begin{equation}
\label{GK} \E G_t G_s = \int_0^s
\int_0^t K(u,v)\,du\,dv,
\end{equation}
with a kernel $K\in L^2 ([0,T]^2 )$. The corresponding formula
for the density $d\mu^X/d\mu^B$, given in \cite{Sh66},
involves the Carleman--Fredholm determinant and resolvent kernel of the
covariance operator, associated with $K$.

While Shepp's result gives a complete answer to the question of
equivalence, it
does not immediately reveal the innovation structure
of the process $X$. The missing link was found by Kailath in \cite{Kai70},
who noticed the relevance of factorization theory of Fredholm operators
in Hilbert spaces, developed by
Gohberg and Krein at around the same time.
Using the resolvent identity (7.10) from \cite{GK70}, Shepp's density
formula is rewritten in \cite{Kai70} in the form:
\[
\frac{d\mu^X}{d\mu^B}(X) = \exp\biggl(-\int_0^T
\varphi_t(X)\,dX_t-\frac{1} 2 \int
_0^T \varphi_t^2(X)\,dt
\biggr), %
\]
where $\varphi_t(X) = \int_0^t L(s,t) \,dX_s$ with $L\in
L^2([0,T]^2)$ being the unique solution of the Wiener--Hopf integral equation
%
\begin{equation}
\label{Leq} L(s,t) + \int_0^t L(r,t)
K(r,s) \,dr = -K(s,t), \qquad0\le s\le t\le T.
\end{equation}
It follows by Girsanov's theorem that the process
%
\begin{equation}
\label{BbarX} \overline{B}_t = X_t + \int
_0^t \int_0^s
L(r,s)\,dX_r \,ds
\end{equation}
is a Brownian motion. Moreover, it is shown in \cite{Kai70} that $\F
^X_t=\F^{\overline{B}}_t$ and
%
\begin{equation}
\label{XBbar} X_t = \overline{B}_t -\int
_0^t \int_0^s
\ell(r,s) \,d\overline{B}_r \,ds,
\end{equation}
where $\ell\in L^2([0,T]^2)$ solves the Volterra equation
%
\begin{equation}
\label{ellL} \ell(s,t) + \int_s^t \ell(r,t)
L(s,r)\,dr = L(s,t),\qquad0\le s\le t\le T.
\end{equation}
In particular, it follows that $X$ has unit multiplicity.

A different construction of canonical representation was given by
Hitsuda in \cite{Hi68}, where
$\mu^X\sim\mu^B$ is shown to hold if and only if $X$ can be
represented in the form \eqref{XBbar} with \textit{some} Brownian motion
$\overline{B}$ and
\textit{some} Volterra kernel $\ell\in L^2([0,T]^2)$. The representation
is proved to be canonical and the formula \eqref{BbarX} is obtained. Though
kernels $\ell$ and $L$ are characterized in \cite{Hi68} in a
different way, it can be shown that, in fact, both representations
coincide. A detailed
discussion about the links between all the aforementioned results can
be found in \cite{Ch03b}.

\subsection{A new canonical representation}
The canonical representation \eqref{BbarX} and \eqref{XBbar} requires
that the kernel $K$ belongs to $L^2([0,T]^2)$.
On the other hand, it is well known that $X$ may have multiplicity
greater than one if, for example, $K$ is only integrable on $[0,T]^2$. This
can be seen in a simple example.

\begin{example}\label{ex}
Consider the process
\[
X_t = B_t+\xi\int_0^t
\frac{1}{\sqrt{\llvert1-s\rrvert}}\,ds, %
\]
where $\xi\sim N(0,1)$ is independent of $B$.
It is easy to see that $\xi$
can be recovered precisely from $\F^X_t$ for all $t\ge1$.
Therefore, the filtration $\F^X_t$ is discontinuous at $t=1$, with $\F
^X_{t-} \varsubsetneq\F^X_t=\F^B_t\vee\sigma\{\xi\}$ for all
$t\ge1$.
By uniqueness of multiplicity, $X$ cannot be innovated by a single
Brownian motion on any interval $[0,T]$ with $T>1$. In this case, equation
\eqref{Leq} has no solution on $[0,T]$, even though $K\in L^1
([0,T]^2 )$.
In fact, discontinuity of filtration is not essential and it is
possible to construct $X$ with arbitrary multiplicity and
continuous natural filtration (Example~2 on page 266 in \cite{LS89}
and Example D on page 72 in \cite{HH76}).
\end{example}

The following theorem shows that $X$ has unit multiplicity under fairly
general conditions,
beyond the $L^2([0,T]^2)$ case, and gives the corresponding canonical
representation.

\begin{teo}\label{teo2}
Let $X$ be given by \eqref{XBG}, where $G$ satisfies \eqref{GK} with
%
\begin{equation}
\label{Kws} \bigl\llvert K(s,t)\bigr\rrvert\le C \bigl(1+\llvert
s-t\rrvert
^{-\alpha} \bigr), \qquad0\le\alpha<1,
\end{equation}
for some constant $C$.
Define
$
\phi_s = 1-\int_0^s L(r,s)\,dr$,
where $L(s,t)$ is the solution of equation \eqref{Leq}. Then the process
%
\begin{equation}
\label{spM} \overline{B}_t = \E\biggl(\int_0^t
\phi_s \,dB_s\Big| \F^X_t \biggr),
\end{equation}
is a Brownian motion, satisfying
%
\begin{equation}
\label{barBq}\overline{B}_t = \int_0^t
q(s,t)\,dX_s,
\end{equation}
with $q(s,t)$ being the unique solution of the Wiener--Hopf equation:
%
\begin{equation}
\label{Qeq} q(s,t) +\int_0^t q(r,t)
K(r,s) \,dr = \phi(s), \qquad0\le s,t\le T.
\end{equation}
The representation
%
\begin{equation}
\label{XqB} X_t = \int_0^t
\hat q(s,t)\,d\overline{B}_s
\end{equation}
with
$
\hat q(s,t) = - \frac{\partial}{\partial s} \int_s^t q(r,s)\,dr$,
is canonical, that is, $\F^X_t=\F^{\overline{B}}_t$.
\end{teo}

\begin{rem}
1.~As\vspace*{1pt} can be seen from the proof in Section~\ref{sec-thm2}, the
assertion of this theorem remains true 
when $K\in L^2([0,T]^2)$, without assuming the particular structure of
\eqref{Kws}.
Moreover,
$
\frac{\partial}{\partial t}q(s,t) = L(s,t)
$
and, therefore,
\[
\overline{B}_t = \int_0^t
q(s,t)\,dX_s = X_t + \int_0^t
\int_s^t L(s,r)\,dr \,dX_s. %
\]
When $K$ is square integrable, so is the solution $L$ of \eqref{Leq}
and the order of integration in the right-hand side can be
interchanged, recovering
the formula \eqref{BbarX}. Moreover, in this case $\hat
q(t,t)=q(t,t)=1$ and
\[
\hat q(s,t) = 1-\int_s^t L(r,s)\,dr %
\]
and hence
%
\begin{eqnarray}
\label{Xfla} X_t &=&\int_0^t
\hat q(s,t)\,d\overline{B}_s
=
\overline{B}_t-\int_0^t \int
_s^t L(r,s)\,dr\,d\overline{B}_s
\nonumber\\[-8pt]\\[-8pt]\nonumber
&=&
\overline{B}_t-\int_0^t \int
_0^r L(r,s) \,d\overline{B}_s\,dr.
\end{eqnarray}

Comparing this with \eqref{XBbar} reveals a curious relation between
the solutions of the Volterra equation
\eqref{ellL} and the Wiener--Hopf equation \eqref{Leq}: the solution
of the former on the sub-diagonal coincides with the solution of the
latter on the \textit{super-diagonal}, that is, $\ell(s,t)=L(t,s)$ for
$s\le t$.
In\vspace*{1pt} other words, both the direct and the inverse transformation between
$X$ and $\overline{B}$ can be expressed in terms of the single Wiener--Hopf
equation \eqref{Leq}, whose solution is extended to the whole
rectangle $[0,T]^2$.

2.~A two stage procedure can be used to construct a canonical
representation for the process
\[
X_t = B_t + G_t + G^\dagger_t,
\]
where $G^\dagger$ is an independent centered Gaussian process,
satisfying \eqref{GK} with $K^\dagger\in L^2([0,T]^2)$.
We can first generate an intermediate process $\overline{X}$ by
applying \eqref{barBq}:
\[
\overline{X}_t = \int_0^t
q(s,t)\,dX_s = \overline{B}_t + \int_0^t
q(s,t)\,dG^\dagger_s. %
\]
It can readily be seen that the process, defined by the last term,
satisfies \eqref{GK} with a square integrable kernel and, therefore,
can be represented canonically in the standard way.

3.~When $K\in L^2([0,T]^2)$, it follows from the results of Shepp
and Hitsuda,
that the kernel $\ell(s,t)$ also solves the Riccati--Volterra equation
%
\begin{equation}
\label{RV} \ell(s,t)=K(s,t)-\int_0^{t\wedge s}
\ell(s,r)\ell(t,r)\,dr.
\end{equation}
If $ G_t = \int_0^t \gamma_s\,ds$ with a Gaussian process
$\gamma$, then the innovating Brownian motion
reduces to
%
\begin{equation}
\label{Kailath} \overline{B}_t = X_t -\int
_0^t \pi_s(\gamma)\,ds,
\end{equation}
with $\pi_t(\gamma)=\E(\gamma_t\mid\F^X_t)=\int_0^t \ell
(s,t)\,d\overline{B}_s$.
In this form, due to  Kailath \cite{Kai68}, the canonical
representation plays an important role in control and filtering theory,
which goes beyond the linear setting (see \cite{He11}).
If furthermore, $\gamma$ is the Gauss--Markov process, \eqref{RV}
reduces to the familiar Riccati equation from the Kalman--Bucy filter.

Note that in our approach conditioning on $\F^X_t$ is used in an
essentially different way than in \eqref{Kailath}:
in the lack of derivative of $G_t$, the innovation Brownian motion is
produced by projecting a specially designed martingale \eqref{spM}.
Somewhat unexpectedly, equality of filtrations can be established in
this case, using only the basic theory of integral equations,
which needs nothing more than weak singularity of~$K$.

4.~Condition \eqref{Kws} is borrowed from the classical theory
of integral equations (see Section~\ref{sec-wseq} below).
It is satisfied by some interesting processes, related to the fBm. One
example is {\em bifractional} Brownian motion, introduced in \cite
{HV02}, which
is a centered Gaussian process $G$ with covariance function
\[
\E G_t G_s = \frac{1} {2^K} \bigl(
\bigl(t^{2H}+s^{2H}\bigr)^K - \llvert t-s\rrvert
^{2HK} \bigr), %
\]
where $H\in(0,1)$ and $K\in(0,1]$. The representation \eqref{GK}
holds for $HK >\frac{1} 2$ with
%
\begin{eqnarray}
\label{KbiBm}
K(s,t) &=& C_1(H,K) \bigl(t^{2H}+s^{2H}
\bigr)^{K-2}s^{2H-1} t^{2H-1}
\nonumber\\[-8pt]\\[-8pt]\nonumber
&&{} +
C_2(H,K)\llvert t-s\rrvert^{2HK-2}, \qquad t\ne s,
\end{eqnarray}
where
$
C_1(H,K)=\frac{K(K-1)(2H)^2} {2^K}<0
$
and
$
C_2(H,K)=\frac{2HK(2HK-1)} {2^K}>0$.
Since $HK>\frac{1} 2$ implies $H>\frac{1} 2$ and $2HK-2\in(-1,0)$,
\begin{eqnarray*}
\bigl(t^{2H}+s^{2H} \bigr)^{K-2}s^{2H-1}
t^{2H-1} &\le& (s\wedge t )^{2H-1} (s\vee t )^{2H(K-1)-1}
\\
&=&
\biggl( \frac{s\wedge t}{s\vee t} \biggr)^{2H-1} (s\vee t )^{2HK-2} \le
\llvert s- t\rrvert^{2HK-2}
\end{eqnarray*}
and thus the kernel \eqref{KbiBm} satisfies \eqref{Kws} with $\alpha
:= 2HK-2$.

Other examples are the \textit{sub-fractional} Brownian motion from
\cite
{BGT04} and the Riemann--Liouville process, which can be fitted in by a
similar calculation.
\end{rem}

Let us now return to the mixed fBm \eqref{mfBm}. For $H>\frac{1} 2$,
$B^H$ satisfies \eqref{GK} with
%
\begin{equation}
\label{K} K_H(s,t) = \frac{\partial^2}{\partial s\,\partial t} \E B^H_tB^H_s
= H(2H-1) \llvert s-t\rrvert^{2H-2}.
\end{equation}
For $H>\frac{3} 4$, this kernel is square integrable and, therefore, the
mixed fBm $X=B+B^H$ can be represented canonically by
\eqref{BbarX}--\eqref{XBbar}. For $H\in(\frac{1} 2, \frac{3} 4]$,
$K_H$ does not belong to $L^2([0,T]^2)$, but
still satisfies the assumptions of Theorem~\ref{teo2}. Therefore, its
canonical representation is given by \eqref{barBq} and \eqref{XqB}
for all values of $H$ in $(\frac{1} 2,1]$.

For $H<\frac{1} 2$, the kernel $K_H$ has stronger singularity than
admitted by \eqref{Kws} and
the covariance of $B^H$ fails to satisfy \eqref{GK}. Nevertheless,
quite remarkably (see Example~\ref{exinno}), the mixed fBm
can still be innovated canonically by a different martingale [cf.
\eqref{spM}].

\begin{teo}\label{teo1}
\textup{(i)} Let $X$ be defined by \eqref{mfBm}. The martingale $M_t = \E
(B_t\mid\F^X_t )$ admits the representation
%
\begin{equation}
\label{MX} M_t = \int_0^t
g(s,t) \,dX_s,\qquad\langle M\rangle_t=\int
_0^t g(s,t)\,ds,
\end{equation}
where $g(s,t)$ is the unique solution of the integro-differential equation
%
\begin{equation}
\label{ideq} g(s,t) + \frac{\partial}{\partial s}\int_0^t
g(r,t) \frac{\partial}{\partial r}R(r,s) \,dr=1, \qquad0<s,t\le T,
\end{equation}
with $R(s,t)$ defined in \eqref{cov}.

\textup{(ii)}~The quadratic variation of $M$ is given by
%
\begin{equation}
\label{Mqv} \qquad\frac{d}{dt} \langle M \rangle_t =
g^2(t,t) + \frac{2-2H}{\lambda
_H} \bigl(t^{1/2-H}(\Psi g) (t,t)
\bigr)^2 > 0, \qquad t\in[0,T],
\end{equation}
where $\lambda_H$ is the constant defined in \eqref{constants} and
$\Psi$ is the operator, defined in \eqref{Psi} below. The innovation
representation
%
\begin{equation}
\label{XG} X_t = \int_0^t
\hat g(s,t)\,dM_s,
\end{equation}
with
%
\begin{equation}
\label{tildeg} \hat g(s,t) = 1- \frac{d}{d\langle M \rangle_s}\int_0^t
g(r,s)\,dr,
\end{equation}
is canonical, that is, $\F^X_t=\F^M_t$ for all $t\in[0,T]$.
\end{teo}

\begin{rem}
1.~For $H>\frac{1} 2$ the kernel $K_H$ in \eqref{K} is integrable and,
therefore, the derivative in \eqref{ideq} can be interchanged with integration,
so that it takes the form of the Wiener--Hopf integral equation [cf.
\eqref{Qeq}]
%
\begin{equation}
\label{WHeq} g(s,t) + \int_0^t g(r,t)
K_H(r,s) \,dr=1, \qquad0\le s,t\le T.
\end{equation}
In this case, the second term in \eqref{Mqv} vanishes [see \eqref
{Psi} below]
and \eqref{tildeg} becomes
%
\begin{equation}
\label{tildegg}\qquad \hat g(s,t) = 1- \frac{1} {g(s,s)} \int_0^t
\frac{ {\partial
}/{\partial s} g(r,s)}{g(s,s)}\,dr=1- \frac{1} {g(s,s)} \int_0^t
L(r,s) \,dr.
\end{equation}
The last equality holds, since $\frac{ {\partial}/{\partial s}
g(r,s)}{g(s,s)}$ turns to be the solution of equation \eqref{Leq}.
Differentiating \eqref{tildegg} yields
$
\frac{\partial}{\partial t}\hat g(s,t) = -\frac{L(t,s) }{g(s,s)}$,
and we obtain
\begin{eqnarray*}
X_t &= & \int_0^t \hat
g(s,t)\,dM_s = \int_0^t \hat
g(s,s)\,dM_s + \int_0^t \int
_s^t\frac{\partial}{\partial\tau}\hat g(s,\tau)\,d\tau
\,dM_s
\\
&=&
\int_0^t \hat g(s,s)\,dM_s -
\int_0^t \int_s^t
\frac{L(\tau
,s)}{g(s,s)} \,d\tau \,dM_s.
\end{eqnarray*}
A calculation shows that $\hat g(s,s)=1/g(s,s)$ and, therefore,
\[
X_t = \overline{B}_t - \int_0^t
\int_s^t L(\tau,s) \,d\tau \,d\overline{B}_s,
\]
where
$
\overline{B}_t = \int_0^t \frac{1}{g(s,s)}\,dM_s$,
is a Brownian motion. Similar calculations give [cf.~\eqref{Xfla}]
\[
\overline{B}_t = X_t + \int_0^t
\int_s^t L(s,r) \,dr \,dX_s,
\]
and hence we are back to the innovation representation from Theorem
\ref{teo2}.

2.~Natural, as it may seem, the choice of the martingale $M_t = \E
(B_t\mid\F^X_t )$ is not at all obvious and,
in fact, it fails to innovate $X$ in general, as demonstrated in the
following example.

\begin{example}\label{exinno}
Let $m(t)=6^{1/3}\wedge t$ and $\xi(t)=\eta m(t)$, where the random
variable $\eta\sim N(0,1)$ is independent of $B$.
The martingale $M_t=\E(B_t\mid\F^X_t)$ still satisfies \eqref{MX} where
$g(s,t)$ solves the Wiener--Hopf equation \eqref{WHeq},
with $K_H(s,t)$ replaced by $K(s,t)= m(s)m(t)$. Its quadratic variation is
$
\langle M\rangle_t = \int_0^t g^2(s,s)\,ds$,
as in Theorem~\ref{teo1} for $H>\frac{1} 2$.

For the degenerate kernel $K(s,t)=m(s)m(t)$, equation \eqref{WHeq} can
be solved explicitly:
\[
g(s,t) = 1-m(s)\frac{\int_0^t m(r)\,dr}{1+\int_0^t m^2(r)\,dr}, \qquad0\le
s\le t\le T, %
\]
and an easy calculation shows that $g(t,t)=0$ for all $t\ge6^{1/3}$.
On the other hand, since $K\in L^2([0,T]^2)$ the representation \eqref
{BbarX}--\eqref{XBbar} is canonical and
therefore $X$ cannot be innovated by $M$, that is, $\F^M_t
\varsubsetneq\F^X_t$ for $t\ge6^{1/3}$.
Incidentally, $\{M_t, t\in[0,T]\}$ is a sufficient statistic in the
problem of estimating $\theta\in\Real$ from
the observations of $\{\theta t + X_t, t\in[0,T]\}$.
\end{example}

The equality $\F^X_t=\F^M_t$ in Theorem~\ref{teo1} is closely
related to Krein's method of solving integral equations
with difference kernels (see Theorem~\ref{Kthm} below). Krein showed
that the solution of the Wiener--Hopf equation with a unit forcing
function such as
\eqref{WHeq} does not vanish on the diagonal and that it can be used
to express solutions to this equation with an arbitrary right-hand side.
Remarkably, this property remains true for kernels with a somewhat more
general structure (Lemma~\ref{cor}), arising in the case of mixed fBm.
Thus, Theorem~\ref{teo1} gives a probabilistic interpretation of the
nondegeneracy of Krein's solutions in terms of the equality of filtrations.

3.~For $H<\frac{1} 2$, the kernel $K_H$ in \eqref{K} has a stronger
singularity and, consequently, the derivative and integration in
equation \eqref{ideq} are no longer interchangeable and the integral
equation \eqref{WHeq}
makes no sense. Nevertheless, \eqref{ideq} can still be solved by
reduction to a different weakly singular integral equation, using tools
from fractional calculus. Moreover, it turns out that, while the first
term in \eqref{Mqv} vanishes in this case, the second term remains
strictly positive for all $t\in[0,T]$ and, consequently, the
martingale $M$ generates the same filtration as $X$. Martingales with such
property are sometimes referred to as {\em fundamental} in fBm
literature (see, e.g., \cite{NVV99}), playing the central role in
related statistical problems.

Construction of a canonical representation for more general mixed
Gaussian processes of the form \eqref{XBG}, beyond the condition
\eqref{Kws},
seems to be quite a delicate problem, especially in view of Example
\ref{ex}.
Our approach remains applicable to other processes of ``fractional'' type.
One example is the Riemann--Liouville process (see \cite{MR99}):
%
\begin{equation}
\label{RLproc} V^H_t = 2H \int_0^t
(t-s)^{H-1/2}\,dV_s,
\end{equation}
where $V$ is a Brownian motion.
While many properties of $V^H$ are similar to those of $B^H$, there are
some essential differences, at least from the standpoint of the problems
under consideration.

First, the increments of $V^H$ are not stationary, and hence the
equivalence of $X=B + V^H$ and $V^H$ for $H<\frac{1} 4$,
cannot be deduced by the spectral technique, used in \cite{BP88} and
\cite{vZ07}. Second, for $H<\frac{1}2$
the first partial derivative
$
\partial/\partial s \E V^H_tV^H_s
$
already has a nonintegrable singularity of the diagonal and
consequently, equation~\eqref{ideq} makes sense only if the inner
derivative is moved to
the solution itself. Further details are referred to Section~\ref{sec-RL}.
\end{rem}

\subsection{Equivalence relations and density formulas}

\subsubsection{The mixed fBm}
As discussed in the \hyperref[sec1]{Introduction}, our interest in
mixed fBm was motivated by the equivalence
relations, discovered in \cite{Ch01} and \cite{vZ07}. We will show
how these results can be derived from the
canonical representation of Theorem~\ref{teo1} and, in addition,
complement them with a new formula for the Radon--Nikodym density
in the case $H<\frac{1} 4$.

\begin{teo}\label{teo3}
\textup{(i)}~The process $X$ defined in \eqref{mfBm} is a semimartingale in its own
filtration if
and only if $H\in\{\frac{1} 2\} \cup(\frac{3} 4,1]$.
For $H\in(\frac{3} 4,1]$, $X$ is a diffusion type process
\[
X_t=\overline{B}_t -\int_0^t
\varphi_s(X)\,ds,\qquad t\in[0,T], %
\]
where $\overline{B}$ is a Brownian motion with $\F^{\overline B}_t=\F^X_t$,
$\varphi_t(X) = \int_0^t L(s,t) \,dX_s$ and
\[
L(s,t) := \frac{\partial}{\partial t} g(s,t) \Big/\sqrt{\frac{d}{dt}
\langle M
\rangle_t}. %
\]
The measures $\mu^X$ and $\mu^B$ are equivalent, if and only if $H\in
(\frac{3} 4, 1 ]$, and
\[
\frac{d\mu^X}{d\mu^B}(X) = \exp\biggl\{-\int_0^T
\varphi_t(X)\,dX_t-\frac{1} 2\int
_0^T \varphi_t^2(X)\,dt
\biggr\}. %
\]

\textup{(ii)}
For $H\in(0,\frac{1} 4 )$, $X$ is a fractional diffusion type process
%
\begin{equation}
\label{BHt} X_t =\overline{B}^H_t-\int
_0^t \rho(s,t) \varphi_s(X)\,ds,
\end{equation}
where $\overline{B}^H$ is fBm with $\F^{\overline{B}^H}_t=\F^X_t$,
$\varphi_t(X) = \int_0^t L(s,t) \,dX_s$ and
\[
L(s,t) := \frac{\partial}{\partial t} g(s,t) \Big/\sqrt{\frac{d}{dt}
\langle M
\rangle_t}-\frac{\partial}{\partial t}\tilde{\rho}(s,t), %
\]
with the kernels $\rho(s,t)$ and $\tilde\rho(s,t)$ are defined
in \eqref{rhorho} below.
The measures $\mu^X$ and $\mu^{B^H}$ are equivalent if and only if
$H\in(0,\frac{1} 4 )$ and
%
\begin{equation}
\label{dmuBh} \frac{d\mu^X}{d\mu^{B^H}}(X) = \exp\biggl\{-\int_0^T
\varphi_t( X)\,d \widetilde{X}_t -\frac{1} 2\int
_0^T \varphi_t^2( X)\,dt
\biggr\},
\end{equation}
where $ {\widetilde X_t = \int_0^t \tilde{\rho}(s,t)\,dX_s}$.
\end{teo}

\begin{rem}
1.~The density formulas in both cases are given in terms of
solutions of certain integral equations, rather than
reproducing kernels as in \cite{vZ07}.
Work in progress indicates that in some statistical applications, such
as estimating
$H>\frac{3}4$ from the sample $X^T = \{X_t, t\in[0,T]\}$, integral
equations are a more manageable alternative.

2.~A similar result holds for the mixed Riemann--Liouville
process $X = B + V^H$ with $V^H$ defined
in \eqref{RLproc}. The precise details appear in Section~\ref{sec-RL} below.
\end{rem}

\subsubsection{Mixed processes with drift}
The canonical representation also yields Girsanov's-type formulas,
useful in the likelihood based statistical inference.
Consider the process
%
\begin{equation}
\label{YfX} Y_t = \int_0^t
\xi_s\,ds + X_t, \qquad t\in[0,T],
\end{equation}
where $X$ is defined in \eqref{mfBm} and $\xi= (\xi_t)$ is a process with
continuous paths, satisfying $\E\int_0^T \llvert\xi_t\rrvert \,dt
<\infty
$. Assume that $\xi$ is adapted to a filtration $\G=(\G_t)$,
with respect to which $M$, introduced in Theorem~\ref{teo1}, is a martingale.

The choice of the filtration $\G$ can vary in different applications.
For example, in filtering problems $\xi$ plays the role of
unobserved state process and $X$ is interpreted as the observation
noise. If the state process and the noise are independent, then the assumption
holds with $\G_t := \F^\xi_t\vee\F^X_t$.

If $\xi_t$ is a function of $Y_t$, then \eqref{YfX} becomes a
stochastic differential
equation with respect to the mixed fBm $X$. In this case, $\xi$ is
adapted to $\F^X_t$ itself, and hence the natural choice is $\G_t
:=\F^X_t$.
For example, $\xi_t: = a Y_t$ with $a \in\Real$ corresponds to the
mixed fractional
Ornstein--Uhlenbeck process
\[
Y_t = a \int_0^t Y_s
\,ds + X_t, \qquad t\in[0,T], %
\]
with the drift parameter $a$.

Theorem~\ref{teo1} yields a formula for the density of $\mu^Y$ with
respect to $\mu^X$:

\begin{cor}\label{teo4}
The process $Y$ admits the representation
%
\begin{equation}
\label{repYG} Y_t = \int_0^t
\hat g(s,t)\,dZ_s
\end{equation}
with $\hat g(s,t)$ defined in \eqref{tildeg},
where
\[
Z_t = \int_0^t
g(s,t)\,dY_s,\qquad t\in[0,T] %
\]
is a $\G$-semimartingale with decomposition
%
\begin{equation}
\label{reprZ} Z_t = M_t + \int_0^t
\Xi(s)\,d\langle M\rangle_s
\end{equation}
and
%
\begin{equation}
\label{Kr2} \Xi(t) = \frac{d}{d\langle M \rangle_t}\int_0^t
g(s,t)\xi_s \,ds.
\end{equation}
In particular, $\F^Y_t=\F^Z_t$, for all $t\in[0,T]$ and, if
\[
\E\exp\biggl\{-\int_0^T
\Xi(t)\,dM_t-\frac{1} 2\int_0^T
\Xi^2(t) \,d\langle M\rangle_t \biggr\}=1, %
\]
then $\mu^X\sim\mu^Y$ and
%
\begin{equation}
\label{RNflaXY} \frac{d\mu^Y}{d\mu^X}(Y) = \exp\biggl\{\int_0^T
\widehat{\Xi}(t)\,dZ_t-\frac{1} 2\int_0^T
\widehat{\Xi}^2(t) \,d\langle M\rangle_t \biggr\},
\end{equation}
where $\widehat{\Xi}(t)=\E(\Xi(t)\mid\F^Y_t )$.
\end{cor}

In the setting of Theorem~\ref{teo2}, we have the following analog.

\begin{cor}\label{teo4b}
Let $Y$ be the process in \eqref{YfX}, where $X$ is defined in \eqref
{XBG} and satisfies the assumptions of Theorem~\ref{teo2}.
Then $Y$ admits the representation
\[
Y_t = \int_0^t \hat q(s,t)
\,dZ_s, 
\]
where the process
$
Z_t = \int_0^t q(s,t) \,dY_s 
$
satisfies
\[
Z_t = \overline{B}_t + \int_0^t
\Xi(s)\,ds, %
\]
with $ \Xi(s) = \xi_s + \int_0^s L(u,s)\xi_u\,du$.
In particular, $\F^Y_t=\F^Z_t$, for all $t\in[0,T]$ and, if
\[
\E\exp\biggl\{-\int_0^T \Xi(t)\,d
\overline{B}_t-\frac{1} 2\int_0^T
\Xi^2(t) \,dt \biggr\}=1, %
\]
then $\mu^X\sim\mu^Y$ and
\[
\frac{d\mu^Y}{d\mu^X}(Y) = \exp\biggl\{\int_0^T
\widehat{\Xi}(t)\,dZ_t-\frac{1} 2\int_0^T
\widehat{\Xi}^2(t) \,d t \biggr\}, %
\]
where $\widehat{\Xi}(t)=\E(\Xi(t)\mid\F^Y_t )$.
\end{cor}

\section{Notation and auxiliary results}\label{sec-not}

\subsection{Weakly singular integral equations}\label{sec-wseq}

In this section, we review terminology and basic theory of integral
equations, relevant to our problem.
We will be concerned with the Wiener--Hopf equations on the finite
interval $[0,T]$, $T<\infty$
%
\begin{equation}
\label{ueq} u(s,t) + \int_0^t u(r,t)
K(r,s)\,dr = f(s,t), \qquad0< s,t\le T,
\end{equation}
where the kernel $K:[0,T]^2\mapsto\Real$ and the forcing function
$f:[0,T]^2\mapsto\Real$ are given.

Note that the values of $u(s,t)$ on the sub-diagonal $\{0<s<t\le T\}$
determine $u(s,t)$ on the super-diagonal.
Hence, the problem of solving \eqref{ueq} reduces to solving it on the
sub-diagonal $\{0<s<t\}$ for all $t\in[0,T]$.
In this regard, \eqref{ueq} can be interpreted as an evolution
equation in the second (forward) variable.
Let us stress, however, that we will consider the solution $u$ as a
function on $[0,T]^2$.

For a fixed $t\in(0,T]$, the restriction of \eqref{ueq} to the sub-diagonal:
%
\begin{equation}
\label{ueqF} u(s,t) + \int_0^t u(r,t)
K(r,s)\,dr = f(s,t), \qquad0< s< t,
\end{equation}
is the Fredholm equation of the second kind, whose solvability is very
well-known under various conditions
(see, e.g., \cite{Kress}).

In this paper, we will consider weakly singular symmetric nonnegative
definite kernels satisfying \eqref{Kws}.
Iterates of $K$ are denoted by $K^{(m)}$:
\begin{eqnarray*}
K^{(1)}(s,t) &=&K(s,t),
\\
K^{(m)}(s,t) &=& \int_0^T
K^{(m-1)}(s,r)K(r,t)\,dr, \qquad m =2,3,\ldots.
\end{eqnarray*}
Recall that for $0<\alpha,\beta<1$
\[
\int_0^T \llvert s-r\rrvert^{-\alpha}
\llvert r-t\rrvert^{-\beta}\,dr \le
\cases{
\displaystyle C_1 \llvert s-t \rrvert^{1-\alpha-\beta}, &\quad$\alpha+\beta>1$,
\cr
\displaystyle C_2\log \frac{1}{\llvert s-t\rrvert} + C_3, &\quad$\alpha+\beta=1$,
\cr
C_4, &\quad$\alpha+\beta<1$,} %
\]
where $C_i$'s are constants. Therefore, singularity improves with
iterations and eventually disappears.

For weakly singular kernels, equation \eqref{ueqF} is uniquely
solvable in $L^1([0,t])$, provided $f(\cdot, t)\in L^1([0,t])$.
If $f(\cdot,t)$ is bounded on $[0,t]$, so is the solution $u(\cdot,t)$.
%
%
If the kernel $K$ is continuous outside the diagonal and $f(\cdot
,t)\in C([0,t])$, the solution $u(\cdot, t)$ is continuous on $[0,t]$,
but typically its derivative has discontinuities at the end points of
the interval.
The comprehensive accounts of these results can be found in \cite{Kress,VP80} and \cite{V93}.

The following lemma shows that the solution of \eqref{ueq} has at most
the same type of singularity as the forcing function.

\begin{lem}\label{lem-sing}
Assume that $\llvert f(s,t)\rrvert\le c \llvert s-t\rrvert^{-\beta
}$ with constants $c$ and
$\beta\in[0,1)$, then the solution of \eqref{ueqF} satisfies
\[
\bigl\llvert u(s,t)\bigr\rrvert\le C \llvert s-t\rrvert^{-\beta
},\qquad
s,t\in[0,T] %
\]
for some constant $C$.
\end{lem}

\begin{pf}
Let $m_0$ be an integer, such that $ \tilde f(s,t)=\int_0^t K^{(m_0)}
(s,r)f(r,t)\,dr$ is
bounded. The function
\[
\tilde u(s,t)=u(s,t) - f(s,t) - \sum_{m=1}^{m_0-1}
\int_0^t (-1)^m K^{(m)}
(s,r)f(r,t)\,dr, %
\]
solves the equation
%
\begin{equation}
\label{tildeueq} \tilde u(s,t)+ \int_0^t
\tilde u(r,t) K(r,s)\,dr = (-1)^{m_0}\tilde f(s,t), \qquad0\le s\le
t.
\end{equation}
Multiplying this equation by $\tilde u(s,t)$, integrating and using
positive definiteness of the kernel $K$, we get
\[
\int_0^t \tilde u^2(s,t) \,ds
\le\int_0^t \tilde u(s,t) \tilde f(s,t)\,ds, %
\]
and, by the Cauchy--Schwarz inequality,
%
\begin{equation}
\label{tildeuB} \int_0^t \tilde u^2(s,t) \,ds \le\int_0^t
\tilde f^2(s,t)\,ds\le\llVert\tilde f \rrVert
_\infty^2.
\end{equation}
Let $n_0$ be the integer such that $K^{(n_0)}$ is bounded,
then iterating \eqref{tildeueq} $n_0$ times gives
\begin{eqnarray*}
\tilde u(s,t) &=& \tilde f(s,t) + \sum_{m=1}^{n_0-1}
(-1)^m \int_0^t
K^{(m)}(r,s)\tilde f(r,t)\,dr
\\
&&{}+
(-1)^{n_0} \int_0^t
K^{(n_0)}(r,s)\tilde u(r,t)\,dr.
\end{eqnarray*}
The first two terms in the right-hand side are bounded, since
$\tilde f$
is bounded and
\[
\sup_{s\le T}\int_0^T
K^{(m)}(s,t)\,dt<\infty \qquad\forall m\ge1. %
\]
The last term is bounded due to \eqref{tildeuB}. It follows that
$\tilde u(s,t)$ is bounded
and therefore $\llvert u(s,t)\rrvert\le C_1\llvert s-t\rrvert
^{-\beta}$ for all $s< t\le T$ with
a constant $C_1$.
As discussed above, the solution of \eqref{ueq} on the super-diagonal
is determined by the solution on the sub-diagonal
and, therefore, the same bound holds for $t< s\le T$
possibly with a different constant.
\end{pf}

The following lemma shows that certain integrals of the solution are
determined by its values on the diagonal.

\begin{lem}\label{lem-qdiag}
Assume $f\in C([0,T])$ does not depend on $t$ and the partial
derivative $\dot u(s,t):=\frac{\partial}{\partial t}u(s,t)$ exists and
$\dot u(\cdot,t)\in L^1([0,t])$, then
\[
\int_0^t u(s,t)f(s) \,ds = \int
_0^t u^2(s,s) \,ds. %
\]
\end{lem}

\begin{pf}
Multiplying \eqref{ueqF} by $u(s,t)$ and integrating, we get
%
\begin{equation}
\label{intintfla} \qquad\int_0^t u^2(s,t)\,ds
+ \int_0^t \int_0^t
u(r,t) u(s,t)K(r,s)\,dr \,ds= \int_0^t
u(s,t)f(s)\,ds
\end{equation}
and, consequently,
\begin{eqnarray*}
\frac{d} {dt}\int_0^t u(s,t)f(s)\,ds &=&
u^2(t,t) + 2u(t,t)\int_0^t u(r,t)
K(r,t)\,dr
\\
&&{} +
2\int_0^t \dot u(r,t) \biggl( u(r,t) +
\int_0^t u(s,t)K(r,s) \,ds \biggr)\,dr
\\
&=&
u^2(t,t) + 2u(t,t) \bigl(f(t)-u(t,t) \bigr) + 2\int
_0^t \dot u(r,t)f(r) \,dr
\\
&=&
-u^2(t,t) + 2\frac{d} {dt}\int_0^t
u(r,t)f(r) \,dr,
\end{eqnarray*}
which gives the claimed identity.
\end{pf}


\subsubsection{Krein's method}

For kernels with certain special structure, the solution of \eqref
{ueqF} with an arbitrary forcing can be expressed in terms of its
solution with the
unit forcing. The following theorem is an adaptation of Theorem 8.1,
Section~8, Chapter IV in \cite{GK70}.

\begin{teo}\label{Kthm}
Assume that the equation
%
\begin{equation}
\label{Keqg} g(s,t)+ \int_0^t
g(r,t)K(r,s)\,dr = 1, \qquad0\le s\le t\le T,
\end{equation}
has a unique continuous solution and $g(t,t)\ne0$, $t\in[0,T]$. Then
the solution of~\eqref{ueqF} with an arbitrary $f(\cdot,t)\in L^1([0,T])$
is given by
%
\begin{equation}
\label{qviag} u(s,t) = g(s,t) F(t,t) -\int_s^t
g(s,u) \frac{\partial}{\partial u} F(u,t)\,du,
\end{equation}
where
%
\begin{equation}
\label{KrF} F(\tau,t) = \frac{1} {g^2(\tau,\tau)}\frac{\partial
}{\partial\tau} \int
_0^\tau g(s,\tau) f(s,t)\,ds.
\end{equation}
\end{teo}

\begin{rem}
The result in \cite{GK70} requires that the forcing function $f$ does
not depend on $t$ and is continuous.
The extension to integrable $f$ can be carried out through
approximation of $f$ by continuous functions in $L^1([0,t])$ in the
usual way.
The formulas \eqref{qviag} and \eqref{KrF}, where $f$ is allowed to
depend on $t$, are obtained by treating $t$ in the right-hand side of
\eqref{ueqF}
as a fixed parameter, applying the original formula (8.7) in \cite
{GK70} and then equating $t$ to the integration limit. We omit lengthy, but
otherwise routine details.
\end{rem}

The following class of kernels will be particularly useful for our purposes.

\begin{lem}\label{lem-dif}
Assume that $f$ does not depend on $t$, $f\in C_1 ((0,T) )\cap
C([0,T])$ and the kernel $K$ has the form
%
\begin{equation}
\label{Kchi} K(s,t) = \chi(s/t)\llvert s-t\rrvert^{-\alpha}, \qquad0\le
\alpha<1
\end{equation}
with $\chi\in C([0,\infty))$. Then the solution $u(s,t)$ of \eqref
{ueq} satisfies the following properties:
\begin{longlist}[(iii)]
\item[(i)]  $u(s,t)$ is\vspace*{1pt} continuously differentiable in $t\in(0,T]$
for any $s>0, s\ne t$, 

\item[(ii)]  the derivative
$\dot u(s,t):= \frac{\partial}{\partial t}u(s,t)$ solves the equation 
%
\begin{equation}
\label{WHdot}
\qquad \dot u(s,t) + \int_0^t \dot
u(r,t) K (r,s) \,dr = - u(t,t) K (s,t), \qquad0<s<t\le T
\end{equation}
and satisfies the bound
\[
\bigl\llvert\dot u(s,t)\bigr\rrvert\le C\llvert s-t\rrvert^{-\alpha},
\qquad0\le s,t\le T. %
\]

\item[(iii)] $\dot u(\cdot,t)\in L^2([0,t])$ for $\alpha< \frac{1}2$.\label{p3}
\end{longlist}
\end{lem}

\begin{pf}
(i) 
The function $u_t(x):= u(xt,t)$, $x\in[0,1]$, $t> 0$ satisfies the
integral equation
\[
u_t(x) + t^{1-\alpha}\int_0^1
u_t(y) K(x,y) \,dv=f(xt), \qquad u\in[0,1]. %
\]
As mentioned above, the unique continuous solution exists and
in the terminology of \cite{RN55}, any point $\lambda:=t^{1-\alpha}$
is regular.
The operator associated with the weakly singular kernel $K$ maps
$L^2([0,1])$ into
itself (see, e.g., Theorem 9.5.1 in \cite{EDV}).
It follows from, for example, the theorem on page 154
in \cite{RN55}, that the resolvent is analytic in $\lambda$, and
hence $u_t(x)$ is continuously differentiable at $t\in(0,T]$ for all
$x\in[0,1]$.
Differentiability of $u_t(x)$ with respect to $x\in(0,1)$ for
continuous $\chi(\cdot)$ can be shown by the method from \cite
{VP80}, using the
particular form of the kernel $K$.
Therefore, the function $u(s,t)= u_{t}(s/t)$ is continuously
differentiable at $t>0$ for any $s\in(0,t)$ and, therefore,
for $s\in(t,T]$ as well.

(ii)~
Equation \eqref{WHdot} is obtained by taking the derivative of both
sides of \eqref{ueq} and the bound follows from
Lemma~\ref{lem-sing}.

(iii)~
Obvious in view of (ii). 
\end{pf}

The crucial assumption in Theorem~\ref{Kthm}, inherited from Theorem
8.1 in \cite{GK70}, is that the solution of \eqref{Keqg} does not
vanish in the diagonal.
This property is guaranteed for symmetric difference kernels of the
form $K(s,t)=\kappa(s-t)$ with $\kappa\in L^1([-T,T])$
(Theorem 8.2 in \cite{GK70}).
Obviously such nondegeneracy cannot be expected to hold in general
(see Example~\ref{exinno}).
The following lemma extends applicability of Krein's method to kernels,
introduced in Lemma~\ref{lem-dif}:

\begin{lem}\label{cor}
The assertion of Theorem~\ref{Kthm} is true for kernels of the form~\eqref{Kchi}.
\end{lem}

\begin{pf}
We will argue that $g(t,t)\ne0$ by contradiction. Suppose $g(t,t)=0$
for some $t>0$.
Changing the integration variable, equation \eqref{Keqg} can be
rewritten as
\[
g(s,t) + s^{1-\alpha}\int_0^{t/s} g(xs,t)
\llvert1-x\rrvert^{-\alpha} \chi(x)\,dx = 1. %
\]
Taking the derivative of both sides with respect to $s$ and multiplying
by $s$, we get
\begin{eqnarray*}
&& sg'(s,t) + s^{2-\alpha}\int_0^{t/s}
x g'(xs,t) \llvert1-x\rrvert^{-\alpha
} \chi(x)\,dx
\\
&&\qquad =
-(1-\alpha)s^{1 -\alpha}\int_0^{t/s} g(xs,t)
\llvert1-x\rrvert^{-\alpha} \chi(x)\,dx,
\end{eqnarray*}
where $g'(s,t)=\frac{\partial}{\partial s}g(s,t)$ and we used $g(t,t)=0$.
Now change the variables back to get
\[
sg'(s,t) + \int_0^{t} r
g'(r,t) K(r,s)\,dr= -(1-\alpha) \bigl(1-g(s,t) \bigr). %
\]
Multiplying by $g(s,t)$ and integrating gives
\begin{eqnarray*}
&& -(1-\alpha) \int_0^t g(s,t)\,ds+(1-\alpha)
\int_0^t g^2(s,t)\,ds
\\
&&\qquad =
\int_0^t sg'(s,t)g(s,t)\,ds +
\int_0^t\int_0^t
r g'(r,t) g(s,t) K(r,s) \,dr\,ds
\\
&&\qquad =
\int_0^t r g'(r,t)\,dr =-\int
_0^t g(r,t)\,dr
\end{eqnarray*}
and, after a rearrangement,
\[
(1-\alpha)\int_0^t g^2(s,t)\,ds+
\alpha\int_0^t g(s,t)\,ds =0. %
\]
By Lemma~\ref{lem-qdiag}, it follows that
\[
(1-\alpha)\int_0^t g^2(s,t)\,ds+
\alpha\int_0^t g^2(s,s)\,ds =0.
\]
This implies that $g(s,t)= 0$ for a.e. $s\in[0,t]$, which contradicts
\eqref{Keqg}.
\end{pf}

\begin{cor}\label{corf}
For the kernel \eqref{Kchi} and $f(s)=s^{\beta}$
with $\beta\ge0$, the solution of \eqref{ueq} does not vanish on the
diagonal, that is, $u(t,t)\ne0$ for all $t\in(0,T]$.
\end{cor}
\begin{pf}
The function $\tilde u(s,t) :=s^{-\beta}u(s,t)$ solves the equation
\[
\tilde u(s,t) + \int_0^t \tilde u(r,t)
(r/s)^{\beta}K(r,s)\,dr = 1. %
\]
The claim follows, since the kernel $(r/s)^{\beta}K(r,s)$ satisfies
the assumption of Lemma~\ref{cor}.
\end{pf}

Krein's method reveals yet another useful formula.

\begin{cor}\label{lem-R}
The function
$
L(s,t) = \frac{\dot g(s,t)}{g(t,t)}
$
satisfies the equation \eqref{Leq}
and
\[
L(s,t) - L(t,s) = \int_s^t L(s,\tau)L(t,\tau)
\,d\tau, \qquad s<t. %
\]
\end{cor}

\begin{pf}
Equation \eqref{Leq} readily follows from Lemma~\ref{lem-dif}.
By \eqref{KrF},
\begin{eqnarray*}
F(\tau,t) &=& -\frac{1}{g^2(\tau,\tau)}\frac{\partial}{\partial
\tau}\int_0^\tau
g(r,\tau)K(r,t)\,dr
\\
&=&
-\frac{1}{g^2(\tau,\tau)} \frac{\partial}{\partial\tau} \bigl(
1-g(t,\tau) \bigr) =
\frac{\dot g(t,\tau)}{g^2(\tau,\tau)},
\end{eqnarray*}
and, integrating by parts in \eqref{qviag}, we get
\begin{eqnarray*}
L(s,t) &=& \frac{\dot g(t,s)}{g(s,s)} +\int_s^t
\frac{\dot g(t,\tau)}{g^2(\tau,\tau)} \dot g(s,\tau)\,d\tau
\\
&=& L(t,s) +\int_s^t
L(s,\tau)L(t,\tau) \,d\tau. %
\end{eqnarray*}\upqed
\end{pf}

\subsection{Stochastic integral representation}

Consider the process $X$ defined in~\eqref{XBG} and let $\eta$ be a
random variable, such that the pair $(\eta,X_t)$ forms a Gaussian process.
Then $\E(\eta\mid\F^X_t)$ belongs to the closure $\mathcal{H}_t^X$
of the linear combinations of the increments of $X$ in $L^2(\Omega,\P
)$. What is less apparent is that
this conditional expectation can be expressed as a stochastic integral
with respect to $X$.
For example, it is not hard to see that such a representation is
impossible in Example~\ref{ex}.

We will assume that the stochastic integral with respect to the process
$G$ is defined on a scalar product space $\Lambda_t$ of
deterministic functions $f:[0,t]\mapsto\Real$, in which simple
(piecewise constant) functions are dense. For $f\in\Lambda_t$
\[
\int_0^t f(s)\,dG_s = \lim
_{n\to\infty} \int_0^t
f_n(s)\,dG_s\qquad\mbox{in } L^2(\Omega, \P),
\]
whenever $f_n\to f$ in $\Lambda_t$. Also we will assume that
%
\begin{equation}
\label{GLambda} \E\int_0^t f(s)
\,dG_s \int_0^t h(s)
\,dG_s = \langle f,g\rangle_{\Lambda_t},
\end{equation}
and either $L^2([0,t])\subseteq\Lambda_t$ or $\Lambda_t$ is complete.

All these assumptions hold for a variety of familiar processes,
including fBm. Let us stress, however, that they do not exclude the
possibility of $\mathcal{H}^G_t$ being strictly larger than the image
of $\Lambda_t$ in $L^2(\Omega,\P)$ under the stochastic integral.
For example, not all random variables in $\mathcal{H}^{B^H}_t$ can be
expressed as stochastic integrals
with respect to $B^H$ (see \cite{PT01}).

\begin{lem}\label{lem-rep}
Under the above assumptions,
\[
\E\bigl(\eta\mid\F^X_t\bigr) = \E\eta+ \int
_0^t h(s,t)\,dX_t, %
\]
with a unique function $h(\cdot,t)\in L^2([0,t])\cap\Lambda_t$.
\end{lem}

\begin{pf}
Following the arguments of the proof of Lemma 10.1 in \cite{LS1}, let
$t_i = t i/ 2^{n}$, $i=0,\ldots,2^n$ and $\F^X_{t,n}=\sigma\{
X_{t_{i}}-X_{t_{i-1}}, i=1,\ldots,2^n\}$.
Then $\F^X_{t,n}\nearrow\F^X_t$ and by the martingale convergence
%
\begin{equation}
\label{lim} \lim_n \E\bigl(\eta\mid
\F^X_{t,n}\bigr) = \E\bigl(\eta\mid\F^X_t
\bigr)\qquad\mbox{in } L^2(\Omega,\P).
\end{equation}
By the normal correlation theorem,
\[
\E\bigl(\eta\mid\F^X_{t,n}\bigr) = \E\eta+ \sum
_{i=1}^{2^n} h_{i-1}^n
(X_{t_i}-X_{t_{i-1}} ), %
\]
with constants $h_{i-1}^n$, $i=1,\ldots,2^{n}$. Define
\[
h_n(s,t):= \sum_{i=1}^{2^n}
h_{i-1}^n \one{s\in[t_{i-1},t_i)},
\]
then
\[
\E\bigl(\eta\mid\F^X_{t,n} \bigr) = \E\eta+ \int
_0^t h_n(s,t)\,dB_s +
\int_0^t h_n(s,t)\,dG_s,
\]
and by independence of $B$ and $G$
\[
\E\bigl(\E\bigl(\eta\mid\F^X_{t,n} \bigr)- \E\bigl(\eta
\mid\F^X_{t,m} \bigr) \bigr)^2 = \llVert
h_n-h_m\rrVert_2^2 + \llVert
h_n-h_m\rrVert_{\Lambda_t}^2.
\]
Therefore, by \eqref{lim}
\begin{eqnarray*}
&& \lim_n \sup_{m\ge n} \bigl( \llVert
h_n-h_m\rrVert^2_{2} +\llVert
h_n-h_m\rrVert^2_{\Lambda
_t} \bigr)
\\
&&\qquad =
\lim_n \sup_{m\ge n}\E\bigl(\E\bigl(\eta
\mid\F^X_{t,n} \bigr)-\E\bigl(\eta\mid\F^X_{t,m}
\bigr) \bigr)^2=0,
\end{eqnarray*}
and hence $h_n\to h$ in $L^2([0,t])$ by its completeness. Since we
assumed that either $L^2([0,t])\subseteq\Lambda_t$ or $\Lambda_t$ is
complete,
$h\in\Lambda_t$ as well.
The claimed representation now follows, since
%
\begin{eqnarray*}
&& \E\biggl(\E\bigl(\eta\mid\F^X_t\bigr)-\E\eta- \int
_0^t h(s,t)\,dB_s -\int
_0^t h(s,t)\,dB^H_s
\biggr)^2
\\
&&\qquad \le
3\E\bigl(\E\bigl(\eta\mid\F^X_t\bigr)-\E\bigl(\eta
\mid\F^X_{t,n}\bigr) \bigr)^2 + 3 \llVert
h-h_n\rrVert^2_2 + 3 \llVert
h-h_n\rrVert^2_{\Lambda_t} \xrightarrow{n\to\infty}
0.
\end{eqnarray*}
The uniqueness of $h$ is obvious.
\end{pf}

\begin{rem}\label{rem-Xrep}
In particular, the conclusion of Lemma~\ref{lem-rep} holds, when $G$
satisfies \eqref{GK} with $K$, such that
\[
\sup_{s\le T} \int_0^T \bigl
\llvert K(s,t)\bigr\rrvert \,dt<\infty. %
\]
In this case, \eqref{GLambda} holds with
\[
\langle f,g\rangle_{\Lambda_t} =\int_0^t
\int_0^t f(s)g(s) K(s,r)\,ds\,dr, %
\]
and $L^2([0,t])\subseteq\Lambda_t$.
Note that weakly singular kernels as in \eqref{Kws} fit this framework.
\end{rem}

\subsection{Fractional Brownian motion}

In the proofs below, we will frequently use a number of well-known
formulas, related to fBm.
Our main references are \cite{NVV99} and \cite{PT01}.

\subsubsection{Constants}
%
\begin{eqnarray}\label{constants}
c_H &=& \frac{1}{2H\Gamma(\sfrac{3} 2-H )\Gamma(H+\sfrac{1}
2 )}, \nonumber
\\
\lambda_H &=& \frac{2H\Gamma(H+\sfrac{1} 2
)\Gamma(3-2H)}{ \Gamma(\sfrac{3} 2-H )},
\\
\beta_H&=& c^2_H \biggl(\frac{1} 2-H
\biggr)^2 \frac{\lambda_H }{2-2H}.\nonumber
\end{eqnarray}

\subsubsection{Spaces and operators}
For $f:[0,t]\mapsto\Real$, define the operators
%
\begin{equation}
\label{Psi}
\qquad (\Psi f) (s,t)= -2H\frac{d}{ds}\int_s^t
f(r) r^{H-\sfrac{1}2}(r-s)^{H-\sfrac{1}2} \,dr ,\qquad0\leq s\leq t ,
\end{equation}
and
\[
(\Phi f) (s)= \frac{d}{ds}\int_0^s
f(r)r^{\sfrac{1}2-H} (s-r)^{\sfrac{1}2-H} \,dr, \qquad0\leq s\leq t. %
\]
These formulas can be readily expressed in terms of the
Riemann--Liouville fractional integrals and derivatives; see \cite{PT01}.
The respective inverse operators are given by
%
\begin{equation}
\label{invK} \bigl(\Psi^{-1} g\bigr) (s,t)= - c_H
s^{1/2-H} \frac{d}{ds} \int_s^t
g(r,t) (r-s)^{1/2-H} \,dr
\end{equation}
and
%
\begin{equation}
\label{invQ} \bigl(\Phi^{-1} g\bigr) (s) = 2H c_H
s^{H-1/2}\frac{d}{ds}\int_0^s
g(r) (s-r)^{H-1/2}\,dr.
\end{equation}
Let us define the space
\[
\Lambda_t^{H-1/2} : = \biggl\{f: [0,t]\mapsto\Real\mbox{ such that } \int_0^t \bigl(s^{1/2-H}(
\Psi f) (s,t) \bigr)^2\,ds <\infty\biggr\}, %
\]
with the scalar product
%
\begin{equation}
\label{sprod} \langle f,g\rangle_{\Lambda_t^{H-1/2}} := \frac
{2-2H}{\lambda_{H}} \int
_0^t s^{1-2H}(\Psi f) (s,t) (\Psi g)
(s,t) \,ds.
\end{equation}
The inclusion $L^2([0,t])\subset\Lambda^{H-1/2}_t$ holds for
$H>\frac{1}2$ and fails for $H<\frac{1} 2$ (see Remark 4.2 in \cite{PT01}).

The expression in \eqref{sprod} can be rewritten as
%
\begin{equation}
\label{phipsifla}
\qquad \langle f,g\rangle_{\Lambda_t^{H-1/2}}= H\int_0^tf(r)
\frac{d}{dr} \int_0^t g (u) \llvert r-u
\rrvert^{2H-1} \sign(r-u)\,du \,dr,
\end{equation}
which for $H>\frac{1} 2$ becomes
\[
\langle f,g\rangle_{\Lambda_t^{H-1/2}} = \int_0^t
\int_0^s K_H(u,v)f(u)g(v)\,du\,dv,
\]
with kernel $K_H$ defined in \eqref{K}.

Finally, for any $H\in(0,1)$ and $f,g\in L^2([0,t])\cap\Lambda
_t^{H-1/2}$, the following identity holds:
%
\begin{equation}
\label{inner_prod} \int_0^tf(s)g(s)
\,ds=c_H\int_0^t(\Psi f) (s,t) (
\Phi g) (s) \,ds.
\end{equation}

\subsubsection{Representation formulas for fBm}
The stochastic integral with respect to fBm, $\int_0^t f(s)\,dB^H_s$,
can be defined for $f\in\Lambda^{H-1/2}_t$ in the usual way
(see \cite{PT01}).
Integrals with kernels
%
\begin{eqnarray}
\label{rhorho} \rho(s,t) &=& \sqrt{\frac{2-2H}{\lambda_H}} s^{1/2-H} (\Psi1)
(s,t),
\nonumber\\[-8pt]\\[-8pt]\nonumber
\tilde\rho(s,t) &=& \sqrt{\frac{2-2H}{\lambda_H}} \bigl(\Psi^{-1}
u^{H-1/2}\bigr) (s,t),
\end{eqnarray}
transform fBm into standard Brownian motion and vise versa. Namely,
\[
B^H_t = \int_0^t
\rho(s,t)\,dW_s, %
\]
where
\[
W_t = \int_0^t \tilde\rho(s,t)\,dB^H_s %
\]
is a standard Brownian motion. More generally,
\[
\int_0^t f(s)\,dB^H_s
= \int_0^t \sqrt{\frac{2-2H}{\lambda_H}}
s^{1/2-H} (\Psi f) (s,t) \,dW_s %
\]
and
\[
\int_0^t f(s)\,dW_s = \int
_0^t \sqrt{\frac{2-2H}{\lambda_H}} \bigl(
\Psi^{-1} u^{H-1/2}f(u) \bigr) (s,t) \,dB^H_s.
\]
It follows that
\[
\E\int_0^t f(u)\,dB^H_u
\int_0^s g(v)\,dB^H_v
= \langle f,g\rangle_{\Lambda_t^{H-1/2}}. %
\]

\section{Proof of Theorem \texorpdfstring{\protect\ref{teo2}}{2.2}}\label{sec-thm2}

As mentioned in Section~\ref{sec-wseq}, for kernels of the form \eqref
{Kws}, equation \eqref{Leq} has a unique solution $L(\cdot,t)\in L^1([0,t])$.
Let
\[
W_t = \int_0^t
\phi(s)\,dB_s, %
\]
with
$ \phi(s) = 1-\int_0^s L(r, s)\,dr$, and define the martingale
$
\overline{B}_t := \E(W_t\mid\F^X_t )$.
By Lemma~\ref{lem-rep} (see Remark~\ref{rem-Xrep})
\[
\overline{B}_t = \int_0^t q(s,t)
\,dX_s, %
\]
where $q$ solves the integral equation \eqref{Qeq}.
A direct substitution shows that the unique solution is given by
\[
q(s,t) = 1 + \int_s^t L(s,r)\,dr, %
\]
and hence
\[
\langle\overline{B}\rangle_t = \E\overline{B}_t^2
= \E\overline{B}_t W_t = \int_0^t
q(s,t) \phi(s)\,ds \stackrel{\dagger} {=} \int_0^t
q^2(s,s)\,ds = t, %
\]
where the equality $\dagger$ holds by Lemma~\ref{lem-qdiag} and the
last equality holds since \mbox{$q(s,s)=1$}.
By the L\'{e}vy theorem, $\overline{B}$ is a Brownian motion in
filtration $\F^X_t$.

Let us check that $\F^{\overline{B}}_t = \F^X_t$. Since $\F
^{\overline{B}}_t\subseteq\F^X_t$, the process
$D_t = X_t -\E(X_t\mid\F^{\overline{B}}_t)$ is $\F^X_t$-adapted, and
hence admits
the representation
\[
D_t = \int_0^t
h(s,t)\,dX_s, %
\]
for some $h(\cdot,t)\in L^2([0,T])$. On the other hand, by the
orthogonality property of conditional expectation, $\E D_t \overline
{B}_s=0$ for all $s\le t$.
Let us show that this condition implies $h(s,t)= 0$ for all $s\le t$,
that is, $D_t= 0$. To this end, we have
\begin{eqnarray*}
\E D_t \overline{B}_s &=& \E\int_0^t
h(u,t) \,dX_u \int_0^s
q(u,s)\,dX_u
\\
&=&
\int_0^s h(u,t) q(u,s)\,du + \int
_0^t h(u,t) \int_0^s
q(v,s) K(v,u)\,dv \,du
\\
&=&
\int_0^s h(u,t) q(u,s)\,du + \int
_0^t h(u,t) \bigl(\varphi(u)-q(u,s) \bigr) \,du
\\
&=&
\int_0^t h(u,t) \varphi(u)\,du -\int
_s^t h(u,t) q(u,s)\,du,
\end{eqnarray*}
where we used \eqref{Qeq}. Since $\E D_t \overline{B}_s=0$ for all
$s\le t$, taking the derivative with respect to $s$, we obtain the
Volterra equation for $h(s,t)$:
%
\begin{equation}
\label{LVolt} h(s,t) - \int_s^t h(u,t)
L(u,s)\,du =0, \qquad s\le t.
\end{equation}
Recall that $L$ solves equation \eqref{Leq}, and thus by Lemma~\ref
{lem-sing}, $\llvert L(s,t)\rrvert\le C\llvert s-t\rrvert^{-\alpha
}$ with a constant $C$.
Iterating \eqref{LVolt} sufficient number of times, it follows that
$h$ also solves the Volterra equation with a
bounded kernel.
Such equations are well known to have a unique solution, and hence
$h(s,t)\equiv0$. Consequently,
$X_t = \E(X_t\mid\F^{\overline{B}}_t)$ and $\F^{\overline{B}}_t=\F^X_t$.

Finally, the inverse transformation \eqref{XqB} 
follows from the normal correlation theorem:
\begin{eqnarray*}
\hat q(s,t) &=& \frac{\partial}{\partial s} \E X_t \overline{B}_s
= \frac{\partial}{\partial s} \E X_t \int_0^s
q(r,s)\,dX_r
\\
&=&
\frac{\partial}{\partial s} \biggl( \int_0^s q(r,s)
\,dr + \int_0^t \int_0^s
K(r,u)q(r,s)\,dr\,du \biggr)
\\
&=&
\frac{\partial}{\partial s} \biggl( \int_0^s q(r,s)
\,dr + \int_0^t \bigl(\phi(u)-q(u,s) \bigr)\,du
\biggr)
\\
&=& -\frac{\partial}{\partial s} \int_s^tq(r,s) \,dr.
\end{eqnarray*}


\section{Proof of Theorem \texorpdfstring{\protect\ref{teo1}}{2.4}}\label{sec-thm1}

Let us first sketch the main steps of the proof. Our candidate for the
innovation process is the martingale
$M_t = \E(B_t\mid\F^X_t)$.
We argue that it can be represented as a stochastic integral with
respect to $X$ and then identify the integrand
as the solution of equation \eqref{ideq}. This equation is solved by
reduction to a weakly singular integral equation, whose precise form
depends on whether $H$ is greater or less than $\frac{1} 2$. The
quadratic variation $\langle M\rangle_t$ is then expressed as an
integral of the solution on
the diagonal, which gives the formula \eqref{Mqv}.

The claimed assertions are obvious for the case $H=\frac{1} 2$, which we
exclude from the consideration thereafter.

\subsection{Proof of part \textup{(i)}}

\subsubsection{The equation \protect\eqref{ideq} and its alternative forms}

The following theorem proves part (i) of Theorem~\ref{teo1} and
elaborates on the structure of equation \eqref{ideq}.

\begin{teo}\label{teo-eq} The representation \eqref{MX} holds with
$g(s,t)$, $s\le t$, being the
unique continuous solution of
the following equations:
\begin{longlist}[(iii)]
\item[(i)] for $H\in(0,1]$, the integro-differential equation~\eqref{ideq},

\item[(ii)] for $H\in(0,1)$,
the fractional integro-differential equation
%
\begin{equation}
\label{f_i_e} c_H (\Phi g) (s) + \frac{2-2H}{\lambda_{H}}(\Psi g)
(s,t)s^{1-2H}=c_H(\Phi1) (s), \qquad s\in(0,t],
\end{equation}

\item[(iii)] for $H\in(\frac{1} 2,1]$, the weakly singular
integral equation
%
\begin{equation}
\label{WHt} g(s,t) + \int_0^t g(r,t)
K_H(r,s) \,dr = 1, \qquad s\in(0,t],
\end{equation}
with kernel $K_H$, defined in \eqref{K},

\item[(iv)]  for $H\in(0,\frac{1} 2)$, the weakly singular
integral equation
%
\begin{eqnarray}
\label{WH_smallH}
&& g(s,t) + \beta_Ht^{-2H}\int
_0^t g(r,t)K^H \biggl(
\frac{r}{t},\frac
{s}{t} \biggr) \,dr
\nonumber\\[-8pt]\\[-8pt]\nonumber
&&\qquad =
c_H s^{\sfrac{1}2-H} (t-s)^{\sfrac{1}2-H}, \qquad s\in[0,t],
\end{eqnarray}
with the kernel
%
\begin{equation}
\label{kappabar} K^H(u,v)= \llvert u-v\rrvert^{-2H} N (u,v ),
\end{equation}
where $N\in C([0,1]^2)$ is given in \eqref{Nfla} below.
\end{longlist}
\end{teo}

\begin{pf}
By Lemma~\ref{lem-rep}, there exists a function $g(\cdot,t)\in
L^2([0,t])\cap\Lambda_t^{H-1/2}$,
such that
\[
M_t =\E\bigl(B_t\mid\F^X_t\bigr)
= \int_0^t g(s,t)\,dX_s.
\]
To verify the representation \eqref{MX}, we have to check that
$g(s,t)$ uniquely solves each one of the equations in
(i)--(iv). To this end, we will argue that $g(s,t)$
satisfies the equation from (ii) for almost every
$s\in[0,t]$. Then we show that this equation reduces to (iii)
for $H>\frac{1} 2$ and to (iv) for $H<\frac{1} 2$.
These weakly singular integral equations are well known to have unique
continuous solutions and, therefore, $g(s,t)$, in fact, satisfies
(ii) for all $s\in[0,t]$.
Finally, we will argue that (ii) and (i) share the
same solution.

For any test function $\varphi\in L^2([0,t])\cap\Lambda
_t^{H-1/2}$, the orthogonality
property of the conditional expectation implies
%
\begin{eqnarray}\label{ortprop}
0 &=& \E\biggl(B_t-\int
_0^t g(s,t)\,dX_s \biggr)\int
_0^t \varphi(s)\,dX_s \nonumber
\\
&=&
\int_0^t
\varphi(s)\,ds - \int_0^t \varphi(s) g(s,t)\,ds \nonumber
\\
&&{} -
\frac{2-2H}{\lambda_H}\int_0^t s^{1-2H} (
\Psi g) (s,t) (\Psi\varphi) (s,t) \,ds
\\
&=&
\int_0^t (\Psi\varphi) (s,t) \biggl(
c_H (\Phi1) (s)\,ds - c_H (\Phi g) (s,t) \nonumber
\\
&&{}-
\frac{2-2H}{\lambda_H} s^{1-2H} (\Psi g) (s,t) \biggr) \,ds,\nonumber
\end{eqnarray}
where we used the identity \eqref{inner_prod}. Since $\varphi$ can be
an arbitrary differentiable function,
$g(s,t)$ satisfies \eqref{f_i_e} for almost all $s\in[0,t]$.

Applying the transformation \eqref{invQ} with $H>\frac{1} 2$ to
equation \eqref{f_i_e}, a direct calculation shows that $g(s,t)$
satisfies \eqref{WHt}.
This weakly singular equation is well known to have a unique solution
(see, e.g., \cite{VP80}), continuous on $[0,t]$.
Since the transformation \eqref{invQ} is invertible, $g(s,t)$ is also
the unique continuous solution of~\eqref{f_i_e} for $H>\frac{1} 2$.

%
%
%
%
%
%
%
%
Similarly, equation \eqref{WH_smallH} is obtained for $H<\frac{1} 2$ by
multiplying \eqref{f_i_e} by $\frac{\lambda_{H}}{2-2H}s^{2H-1}$ and
applying the transformation $\Psi^{-1}$.
A calculation shows that
\[
K^H(u,v)= (uv)^{\sfrac{1}2-H}\int_{u\vee v}^{1}r^{2H-1}
(r-u)^{-\sfrac{1}2-H}(r-v)^{-\sfrac{1}2-H} \,dr, %
\]
and changing the integration variable to $x:=\frac{1-v}{u-v}\frac
{r-u}{1-r}$ we get
\eqref{kappabar},
where
%
\begin{eqnarray}\label{Nfla}
N(u,v) &=& \biggl(\frac{a} b \biggr)^{1/2-H}
\nonumber\\[-8pt]\\[-8pt]\nonumber
&&{}\times \int
_0^\infty x^{-\sfrac{1} 2-H}(1+x)^{-\sfrac{1} 2-H}
\biggl(1+ \biggl(1-\frac{a} b \biggr)x \biggr)^{2H-1}\,dx,
\end{eqnarray}
with
\[
a = \frac{u} {1-u} \wedge\frac{v} {1-v}, \qquad b = \frac{u} {1-u}
\vee\frac{v} {1-v}. %
\]
For $H<\frac{1} 2$ the function $N$ is continuous and thus kernel $K^H$
is weakly singular. Since
the right-hand side of \eqref{WH_smallH} is a continuous function for
$H<\frac{1} 2$, this equation has a unique solution, continuous on $[0,t]$.
This completes the proof of (iv) and, in turn, of (ii).

Further, the identity \eqref{phipsifla} and orthogonality property
\eqref{ortprop} imply
\begin{eqnarray*}
0 &=& \int_0^t \varphi(s)\,ds - \int
_0^t \varphi(s) g(s,t)\,ds - \frac{2-2H}{\lambda_H}
\int_0^t s^{1-2H} (\Psi g) (s,t) (\Psi
\varphi) (s,t) \,ds
\\
&=&\int_0^t \varphi(s) \biggl( 1-g(s,t) - H
\frac{d}{ds} \int_0^t g(r,t) \llvert s-r
\rrvert^{2H-1} \sign(s-r)\,dr \biggr)\,ds.
\end{eqnarray*}
The assertion (i) follows, in view of arbitrariness of
$\varphi$ and unique solvability of~\eqref{f_i_e}.
Finally, for $t\in[0,T]$,
\[
\langle M \rangle_t = \E M_t^2 = \E
B_t M_t = \E B_t \int_0^t
g(s,t)\,dX_s = \int_0^t g(s,t)\,ds.
\]\upqed
\end{pf}

\subsection{Proof of part \textup{(ii)} for \texorpdfstring{$H>\frac12$}{H>1/2}}
Note that in this case, the derivative and integration in \eqref{Psi}
can be interchanged, and hence
\[
(\Psi g) (s,t)= 2H \biggl(H- \frac{1}2 \biggr)\int_s^t
g(r,t) r^{H-\sfrac{1}2}(r-s)^{H-3/2} \,dr ,\qquad0\leq s\leq t , %
\]
and $(\Psi g)(t,t)=0$ for all $t\in[0,T]$. Therefore, \eqref{Mqv}
holds by \eqref{MX} and Lemma~\ref{lem-qdiag},
and, in fact,
%
\begin{equation}
\label{MqvHlarge} \langle M\rangle_t = \int_0^t
g^2(s,s)\,ds>0.
\end{equation}
Since $g(t,t)>0$ for all $t\in[0,T]$ there exists a function $\hat
g(\cdot,t)\in L^2([0,t])$, such that
\[
\E\bigl(X_t\mid\F^M_t\bigr) = \int
_0^t \hat g(s,t)\,dM_s, \qquad t
\in[0,T]. %
\]
By the normal correlation theorem,
\[
\hat g(s,t)=\frac{d}{d\langle M \rangle_s}\E X_t M_s, %
\]
and the formula \eqref{tildeg} follows, since
%
\begin{eqnarray}\label{gen}
\E X_t M_s&=&
\int_0^sg(r,s)\frac{\partial}{\partial r}\E
X_tX_r \,dr\nonumber
\\
&=&
\int_0^s g(r,s)\,dr + H\int
_0^s g(r,s) \bigl(r^{2H-1} +
(t-r)^{2H-1} \bigr) \,dr
\nonumber\\[-8pt]\\[-8pt]\nonumber
&=&
\langle M \rangle_s + \int_0^t
H\frac{d}{d\tau}\int_0^s g(r,s) \llvert r-
\tau\rrvert^{2H-1}\sign(r-\tau)\,dr \,d\tau\nonumber
\\
&=&
\langle M \rangle_s + \int_0^t
\bigl(1-g(\tau,s)\bigr)\,d\tau.\nonumber
\end{eqnarray}
It is left to check that $\F^X_t= \F^M_t$, that is, $\E(X_t\mid\F
^M_t)=X_t$ or
%
\begin{equation}
\label{xixi} \E\bigl(X_t- \E\bigl(X_t\mid
\F^M_t \bigr) \bigr)^2=\E
X_t^2 -\E\bigl(\E\bigl(X_t\mid
\F^M_t \bigr) \bigr)^2 =0.
\end{equation}
Since $X_0=\E(X_0\mid\F^M_0)=0$, \eqref{xixi} holds if
\[
\frac{\partial^2}{\partial t\,\partial s} \int_0^{t\wedge s} \hat
g(r,t)\hat
g(r,s)\,d\langle M\rangle_r = K_H(t,s), \qquad s<t,
\]
or, by \eqref{MqvHlarge}, if
%
\begin{equation}
\label{dotGG} \dot{\hat g}(s,t) \hat g(s,s)g^2(s,s) + \int
_0^{s} \dot{\hat g} (r,t)\dot{\hat
g}(r,s)g^2(r,r)\,dr = K_H(t,s).
\end{equation}
Further, we have
\begin{eqnarray*}
\hat g(t,t) &= & 1 - \frac{1}{g^2(t,t)} \int_0^t
\dot g(s,t) \,ds = 1 - \frac{1}{g^2(t,t)} \biggl(\frac{d}{dt}\int
_0^t g(s,t)\,ds-g(t,t) \biggr)
\\
&=&
1 - \frac{1}{g^2(t,t)} \bigl(g^2(t,t)-g(t,t) \bigr) =
\frac{1} {g(t,t)},
\end{eqnarray*}
and, since $\dot{\hat g}(s,t) g(s,s) =-L(t,s)$, \eqref{dotGG} becomes
%
\begin{equation}
\label{RRR} -L(t,s) + \int_0^{s} L(t,r)
L(s,r) \,dr = K_H(t,s).
\end{equation}
Recall that the function $L$, satisfies equation \eqref{Leq}.
Rearranging the terms, multiplying by $L(s,u)$ and integrating gives
\begin{eqnarray*}
&& \int_0^s L(t,u) L(s,u)\,du + \int
_0^s K_H(t,u) L(s,u)\,du
\\
&&\qquad =
- \int_0^s \int_0^u
L(r,u)L(s,u) K_H(r,t) \,dr\,du
\\
&&\qquad =
- \int_0^s \biggl(\int
_r^s L(r,u)L(s,u)\,du \biggr) K_H(r,t)
\,dr
\\
&&\qquad =
- \int_0^s \bigl(L(r,s)-L(s,r) \bigr)
K_H(r,t) \,dr,
\end{eqnarray*}
where we used Corollary~\ref{lem-R}.
The second term on the left-hand side and the last term on the
right-hand side
cancel out and we get
\[
\int_0^s L(t,u) L(s,u)\,du=- \int
_0^s L(r,s) K_H(r,t) \,dr = L(t,s) +
K_H(s,t), %
\]
which verifies \eqref{RRR} and, therefore, \eqref{xixi}, thus
completing the proof. 

\subsection{An auxiliary process \texorpdfstring{$\widetilde X$}{widetilde X}}
The proof of (ii) of Theorem~\ref{teo1} in the case $H<\frac{1} 2$
relies on
the auxiliary process
%
\begin{equation}
\label{def_rho} \widetilde X_t = \int_0^t
\tilde{\rho}(s,t) \,dX_s,
\end{equation}
where the kernel $\tilde\rho$ is defined in \eqref{rhorho}. In
this section, we explore some of its properties.

\begin{lem}\label{lem-tildeX}
$\widetilde{X} = \widetilde B + \widetilde{U}$,
where $\widetilde B$ is a Brownian motion in its own filtration and
$\widetilde{U}$ is a centered Gaussian process with the covariance
function, satisfying
%
\begin{equation}
\label{tilde_kappa} \widetilde K_H (s,t) :=\frac{\partial^2}{\partial
s\,\partial t} \E
\widetilde{U}_s\widetilde{U}_t = \llvert t-s\rrvert
^{-2H}\chi(t/s ), \qquad s\ne t,
\end{equation}
where
\[
\chi(u)=\beta_H \bigl(u\wedge u^{-1} \bigr)^{\sfrac{1}2-H}L
\biggl( \frac
{1}{u\vee u^{-1}-1} \biggr),\qquad u\in\Real_+, %
\]
and
\[
L(v)=\int_{0}^{v}r^{-\sfrac{1}2-H}(1+r)^{-\sfrac{1}2-H}
\biggl(1-\frac
{r}{v} \biggr)^{1-2H} \,dr. %
\]
Moreover, $\F^X_t=\F^{\widetilde X}_t$ for all $t\in[0,T]$.
\end{lem}


\begin{pf}
It is well known (see, e.g., \cite{NVV99}),
that the integral transformation \eqref{def_rho} is
invertible:
\[
X_t = \int_0^t \rho(s,t) \,d
\widetilde X_s, \qquad t\in[0,T], %
\]
where $\rho$ is defined in \eqref{rhorho}.
In particular, $\F^X_t=\F^{\widetilde X}_t$, $t\in[0,T]$.

Further,\vspace*{1pt} it follows from \cite{NVV99} that the process $\widetilde B_t
= \int_0^t \tilde\rho(s,t)\,dB^H_s$ is the Brownian motion
in its own filtration.
Hence, $\widetilde X=\widetilde B + \widetilde U$ with
\[
\widetilde U_t = \int_0^t
\tilde\rho(s,t) \,dB_s, %
\]
and
\[
\widetilde K_H(s,t)=\frac{\partial^2}{\partial s\,\partial t}\E
\widetilde U_t
\widetilde U_s = \frac{\partial^2}{\partial s\,\partial
t} \int_0^{s\wedge t}
\tilde\rho(r,s)\tilde\rho(r,t)\,dr =\int_0^{s \wedge t}
\dot{\tilde\rho}(r,s)\dot{\tilde\rho}(r,t)\,dr, %
\]
where $\dot{\tilde\rho}(s,t) = \frac{\partial} {\partial t}
{\tilde\rho}(s,t)$ and we used the property $\tilde\rho(s,s)=0$.
The expression in~\eqref{tilde_kappa} is obtained by a direct
calculation, using the explicit expression \eqref{rhorho} for
$\tilde\rho(s,t)$.
\end{pf}


\subsection{The martingale $M$ for \texorpdfstring{$H<\frac12$}{H<1/2}}

The structure of the martingale $M$ and its relation to the process
$\widetilde X$ are
described in detail in the following lemma.

\begin{lem}\label{mart-brak}
For $H<\frac{1} 2$ and $t\in[0,T]$,
%
\begin{equation}
\label{MtildeX} M_t = \int_0^t
p(s,t) \,d\widetilde X_s,\qquad\langle M\rangle_t = \int
_0^t p^2(s,s)\,ds,
\end{equation}
where
%
\begin{equation}
\label{mart_brac} p(s,t):=\sqrt{\frac{2-2H}{\lambda_H}}s^{\sfrac{1}2-H}(\Psi g) (s,t)
\end{equation}
solves the equation 
%
\begin{eqnarray}\label{M_brak_smallH}
p(s,t) + \int_0^t p(r,t)
\widetilde K_H(r,s) \,dr=\sqrt{\frac
{2-2H}{\lambda_H}}s^{\sfrac{1}2-H},
\nonumber\\[-9pt]\\[-9pt]
\eqntext{0\le s\le t\le T.}
\end{eqnarray}
Moreover, $p^2(t,t)>0$ for all $t>0$ and $\F^M_t=\F^{\widetilde X}_t$.
\end{lem}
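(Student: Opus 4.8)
\emph{Plan.} The lemma is, for $H<\tfrac12$, the exact analogue of Theorem \ref{thm-fund}: the same martingale $M=\E(B_\cdot|\F^X_\cdot)$ is now played against the transformed process $\widetilde X=\widetilde B+\widetilde U$, which has the same additive structure (a Brownian part $\widetilde B$ plus a weakly singular Gaussian part $\widetilde U$ with covariance density $\widetilde\kappa$) that $X$ had for $H>\tfrac12$. So I expect to replay the arguments of Theorem \ref{thm-eq} and Lemma \ref{cor} with $X,\kappa,g$ replaced by $\widetilde X,\widetilde\kappa,p$, the only genuinely new ingredient being the closed-form identification $p(s,t)=\sqrt{\tfrac{2-2H}{\lambda_H}}\,s^{1/2-H}K_g(s,t)$ in \eqref{mart_brac}.

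First I would obtain the representation $M_t=\int_0^t p(s,t)\,d\widetilde X_s$ by transferring the known formula $M_t=\int_0^t g(s,t)\,dX_s$ from \eqref{rep_M} through the transform $\widetilde\rho$. Writing $M_t=\int_0^t g\,dB+\int_0^t g\,dB^H$ and recalling that $\widetilde B=\int\widetilde\rho\,dB^H$ is a Brownian motion while $\widetilde U=\int\widetilde\rho\,dB$, it suffices to check the two transfer identities $\int_0^t g\,dB^H=\int_0^t p\,d\widetilde B$ and $\int_0^t g\,dB=\int_0^t p\,d\widetilde U$. The first is immediate from the isometry \eqref{corstochint}, since $f\mapsto\sqrt{\tfrac{2-2H}{\lambda_H}}s^{1/2-H}K_f(s,t)$ is precisely the map intertwining $dB^H$ and $d\widetilde B$. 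The second, after writing $\int_0^t p\,d\widetilde U$ as a single Wiener integral against $dB$ using $\dot{\widetilde\rho}(s,r)=\sqrt{\beta_H}\,s^{1/2-H}r^{H-1/2}(r-s)^{-1/2-H}$ and $\widetilde\rho(s,s)=0$, reduces to the pointwise identity $g(s,t)=\int_s^t p(r,t)\,\dot{\widetilde\rho}(s,r)\,dr$; this is exactly the inversion formula \eqref{invK} applied to $f=g(\cdot,t)$, once one records the normalization $\sqrt{\beta_H}=c_H(\tfrac12-H)\sqrt{\lambda_H/(2-2H)}$, which holds by the definition of $\beta_H$. Summing the two identities gives the first half of \eqref{MtildeX}.

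For the integral equation \eqref{M_brak_smallH} I would argue exactly as in \eqref{ortprop}: since $M_t=\E(B_t|\F^{\widetilde X}_t)$, the residual $B_t-\int_0^t p\,d\widetilde X$ is orthogonal to $\int_0^t\varphi\,d\widetilde X$ for every test function $\varphi$. Expanding the covariances via $\widetilde B\perp\widetilde U$ and the density $\widetilde\kappa$ (as in \eqref{simpcor}, \eqref{tilde_kappa}), together with the elementary computation $\E\big(B_t\int_0^t\varphi\,d\widetilde U\big)=\sqrt{\beta_H}\,B(\tfrac32-H,\tfrac12-H)\int_0^t\varphi(s)s^{1/2-H}\,ds$, yields \eqref{M_brak_smallH}, the right-hand constant collapsing to $\sqrt{\tfrac{2-2H}{\lambda_H}}$ by the identity $\Gamma(3-2H)=(2-2H)\Gamma(2-2H)$. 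Weak singularity of $\widetilde\kappa$ and continuity of the right-hand side then give a unique continuous solution. Finally, for the bracket I note that $\langle M\rangle_t=\E M_t^2=\E(M_tB_t)=\int_0^t g(s,t)\,ds$ holds for every $H$, and I would rewrite this as $\int_0^t p^2(s,s)\,ds$ by replaying the proof of \eqref{fla} in Lemma \ref{cor}, now for \eqref{M_brak_smallH} with $t$-independent right-hand side $\phi(s):=\sqrt{\tfrac{2-2H}{\lambda_H}}s^{1/2-H}$: multiplying by $p(s,t)$ and integrating gives $\langle M\rangle_t=\int_0^t\phi(s)p(s,t)\,ds$, and differentiating in $t$ while using $\dot p(s,t)+p(t,t)\widetilde\kappa(t,s)+\int_0^t\dot p(r,t)\widetilde\kappa(r,s)\,dr=0$ produces $\frac{d}{dt}\langle M\rangle_t=p^2(t,t)$.

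The main obstacle is the Brownian-side transfer identity $g(s,t)=\int_s^t p(r,t)\dot{\widetilde\rho}(s,r)\,dr$: this is where the explicit kernel \eqref{def_rho}, the fractional inversion \eqref{invK}, and the precise value of $\beta_H$ must all be reconciled, including careful handling of the weak singularity and the vanishing of the boundary term at $r=s$. It is the one step that genuinely uses the structure of $\widetilde\rho$ rather than being a formal replay of the $H>\tfrac12$ theory; everything else follows by the transfer of the Section \ref{sec-ie} machinery to the pair $(M,\widetilde X)$.
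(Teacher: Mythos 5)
Your proposal is correct, and your constant bookkeeping checks out: $\sqrt{\beta_H}\,\sqrt{(2-2H)/\lambda_H}=c_H(\tfrac12-H)$ by the definition of $\beta_H$, and $c_H(\tfrac12-H)B(\tfrac32-H,\tfrac12-H)=\tfrac{2-2H}{\lambda_H}$ via $(\tfrac12-H)\Gamma(\tfrac12-H)=\Gamma(\tfrac32-H)$ and $\Gamma(3-2H)=(2-2H)\Gamma(2-2H)$. But your route differs from the paper's in two of the three steps. For the integral equation \eqref{M_brak_smallH}, the paper works purely analytically: it substitutes the inversion \eqref{invK} of $g$ in terms of $p$ into the first term of \eqref{f_i_e} and computes $c_HQ_g(s)$ by a Beta-integral calculation that reproduces exactly the kernel $\widetilde\kappa$ of \eqref{tilde_kappa}, with no probabilistic input; you instead replay the orthogonality argument \eqref{ortprop} for the pair $(B,\widetilde X)$, which is legitimate in your order of steps since you establish the representation first, and your cross-covariance $\E\big(B_t\int_0^t\varphi\,d\widetilde U\big)=\sqrt{\beta_H}\,B(\tfrac32-H,\tfrac12-H)\int_0^t\varphi(s)s^{1/2-H}ds$ is exactly right. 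For the representation, the paper does not split $\widetilde X=\widetilde B+\widetilde U$: it first shows $p'(s,t)=\partial_s p(s,t)$ is integrable by differentiating \eqref{M_brak_smallH} in $s$, then performs a pathwise integration by parts and Fubini to get $\int_0^t p(s,t)\,d\widetilde X_s=\int_0^t\big(\int_r^t\widetilde\rho\,'(r,s)p(s,t)\,ds\big)dX_r$, which reduces to precisely the kernel identity you isolate as the crux, $g(r,t)=\int_r^t p(s,t)\,\dot{\widetilde\rho}(r,s)\,ds$, i.e.\ \eqref{invK} with the boundary term at $s=r$ vanishing because $\tfrac12-H>0$. So both proofs hinge on the same identity; your split buys the ability to quote the standard Norros--Valkeila--Virtamo intertwining on the $B^H$ side, at the cost of a stochastic-Fubini justification for $\int p\,d\widetilde U$ that plays the role of the paper's $p'$-integrability lemma --- comparable effort either way. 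The bracket computation is essentially identical in both (the paper uses \eqref{inner_prod} to write $\langle M\rangle_t=\int_0^t g(s,t)\,ds=\int_0^t\phi(s)p(s,t)\,ds$ and then argues as in part (iii) of Lemma \ref{cor}); one small gloss in your write-up: multiplying \eqref{M_brak_smallH} by $p(s,t)$ and integrating yields $\int_0^t p^2(s,t)\,ds+\iint p\,p\,\widetilde\kappa=\int_0^t\phi(s)p(s,t)\,ds$, and identifying the left-hand side with $\langle M\rangle_t=\E M_t^2$ requires the covariance structure of $\widetilde X$ (or, as the paper does, the identity \eqref{inner_prod}); after that your differentiation using $\dot p(s,t)+p(t,t)\widetilde\kappa(t,s)+\int_0^t\dot p(r,t)\widetilde\kappa(r,s)\,dr=0$ does give $\tfrac{d}{dt}\langle M\rangle_t=p^2(t,t)$, as claimed.
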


\begin{pf}
Let us set $C:= \sqrt{\frac{\lambda_H}{2-2H}}$ for brevity.
The equation \eqref{M_brak_smallH} is obtained from equation \eqref
{f_i_e} by replacing $g$ in the first term with [see \eqref{invK}]
\begin{eqnarray*}
g(s,t) &=& - c_H s^{1/2-H} \frac{d}{ds} \int
_s^t (\Psi g) (r,t) (r-s)^{1/2-H} \,dr
\\
&=&
- c_H C s^{1/2-H} \frac{d}{ds} \int
_s^t p(r,t) r^{H-1/2}
(r-s)^{1/2-H} \,dr.
\end{eqnarray*}
Indeed,
\begin{eqnarray*}
&& c_H (\Phi g ) (s)
\\
&&\qquad =
-c_H^2 C \frac{d}{ds}\int_0^s
r^{1-2H} (s-r)^{1/2-H} \frac
{d}{dr}
\\
&&\quad\qquad{}\times \int
_r^t p(u,t) u^{H-1/2}
(u-r)^{1/2-H} \,du\,dr
\\
&&\qquad =
s^{1/2-H} C \int
_0^t p(u,t) \beta_H(su)^{H-1/2}
\\
&&\quad\qquad{}\times
\int_0^{s\wedge u} r^{1-2H}
(s-r)^{-\sfrac{1} 2-H} (u-r)^{-\sfrac{1} 2-H}\,dr \,du
\\
&&\qquad =
s^{1/2-H} C \int_0^t p(u,t)
\widetilde K_H(u,s) \,du,
\end{eqnarray*}
where we used the definition \eqref{tilde_kappa} of $\widetilde K_H$.
Equation \eqref{M_brak_smallH} follows, since
\[
c_H(\Phi1) (s) = \frac{2-2H}{\lambda_H} s^{1-2H}. %
\]

Integrating by parts, we get the first formula in \eqref{MtildeX}:
\begin{eqnarray*}
&& \int_0^t p(s,t)\,d\widetilde X_s
\\
&&\qquad = p(t,t)\widetilde X_t-\int_0^t
\widetilde X_s p'(s,t)\,ds
\\
&&\qquad =
p(t,t)\widetilde X_t-\int_0^t
\int_0^s \tilde\rho(r,s) \,dX_r
p'(s,t)\,ds
\\
&&\qquad =
\int_0^t p(t,t)\tilde\rho(r,s)\,dX_r-\int_0^t \int
_r^t \tilde\rho(r,s) p'(s,t)\,ds
\,dX_r
\\
&&\qquad =
\int_0^t \int_r^t
\tilde\rho'(r,s) p(s,t)\,ds \,dX_r= \int
_0^t g(s,t) \,dX_s,
\end{eqnarray*}
where the last equality holds by a direct calculation, using the
definitions \eqref{def_rho} and \eqref{mart_brac}.

The second formula is obtained, using the identity \eqref{inner_prod}:
\begin{eqnarray*}
\langle M\rangle_t &=& \int_0^t
g(s,t)\,ds= \frac{2-2H}{\lambda_H}\int_0^t
s^{1-2H}(\Psi g) (s,t)\,ds
\\
&=&
\sqrt{\frac{2-2H}{\lambda_H}}\int_0^tp(s,t)s^{\sfrac{1}2-H}\,ds=
\int_0^t p^2(s,s)\,ds,
\end{eqnarray*}
where the last equality holds by Lemma~\ref{lem-qdiag} and
$p^2(t,t)>0$ for all $t>0$ by Corollary~\ref{corf}.


It is left to verify the inclusion $\F^{\widetilde X}_t\subseteq\F
^M_t$, or equivalently, $\E(\widetilde X_t\mid\F^M_t
)=\widetilde X_t$.
Since $\widetilde X_t-\E(\widetilde X_t\mid\F^M_t )$ is
measurable with respect to $\F^{\widetilde X}_t$ and $\widetilde
K_H(s,t)$ is weakly singular,
it admits the representation (see Remark~\ref{rem-Xrep})
\[
\widetilde X_t-\E\bigl(\widetilde X_t\mid
\F^M_t \bigr) = \int_0^t
h(r,t)\,d\widetilde X_r %
\]
with $h(\cdot,t)\in L^2([0,t])$. By the orthogonality property of the
conditional expectation,
\[
\E M_s \int_0^t h(r,t)\,d
\widetilde X_r =0, \qquad s\le t. %
\]
Let us show that this condition implies that $h(s,t)=0$ for all $s\le
t$, therefore, completing the proof.
Indeed, for $s\le t$,
\begin{eqnarray*}
&& \E M_s \int_0^t h(r,t)\,d
\widetilde X_r
\\
&&\qquad = \E\int_0^s p(u,s)\,d
\widetilde X_u \int_0^t h(r,t)\,d
\widetilde X_r
\\
&&\qquad =
\int_0^s p(r,s)h(r,t) \,dr + \int
_0^t h(r,t) \int_0^s
p(u,s)\widetilde K(r,u) \,du \,dr
\\
&&\qquad =
\int_0^s p(r,s)h(r,t) \,dr + \int
_0^t h(r,t) \bigl(C r^{\sfrac{1}2-H}-p(r,s)
\bigr) \,dr
\\
&&\qquad =
C\int_0^t h(r,t) r^{\sfrac{1}2-H}\,dr-\int
_s^t h(r,t)p(r,s) \,dr.
\end{eqnarray*}
Taking the derivative with respect to $s$, we get
\[
h(s,t)p(s,s) -\int_s^t h(r,t)\dot p(r,s) \,dr=0
\]
and, since $p(s,s)>0$, it follows that $h(s,t)$ solves the Volterra equation
%
\begin{equation}
\label{Volth} h(s,t) -\int_s^t h(r,t)
\widetilde L(r,s) \,dr=0,
\end{equation}
where $\widetilde{L}(s,t)= \dot p(r,s)/p(s,s)$ solves the equation
[cf. \eqref{Leq}]
\[
\widetilde{L}(s,t) + \int_0^t
\widetilde{L}(r,t) \widetilde K_H(r,s) \,dr= -\widetilde K_H(r,s), \qquad
0\le s\le t\le T. %
\]
By Lemma~\ref{lem-sing},
$\llvert\widetilde{L}(s,t)\rrvert\le C_1 \llvert s-t\rrvert^{-2H}$
for some constant $C_1$. By
a sufficient number of iterations, \eqref{Volth} becomes
the Volterra equation with a continuous kernel, which has a unique
solution $h(s,t)\equiv0$.
\end{pf}

%
%
%
%

\subsection{Proof of part \textup{(ii)} for \texorpdfstring{$H<\frac12$}{H<1/2}}

Equation \eqref{WH_smallH} with $s:=t$ yields $g(t,t) = 0$, $t\ge0$,
since $K^H(u,1)\equiv0$.
Hence, the formula \eqref{Mqv} holds by Lemma~\ref{mart-brak}. The
calculations in \eqref{gen} are valid for any $H$, and hence
\[
\E\bigl(X_t\mid\F^M_t\bigr) = \int
_0^t \hat g(s,t)\,dM_s, %
\]
where $\hat g(s,t)$ is given by \eqref{tildeg}.
Finally, $\F^M_t=\F^{\widetilde X}_t=\F^X_t$, by Lemmas~\ref
{mart-brak} and~\ref{lem-tildeX}. 

\section{Proof of Theorem \texorpdfstring{\protect\ref{teo3}}{2.7}}

As discussed in the \hyperref[sec1]{Introduction}, some of the assertions in this
theorem have been previously proved by a number of authors, using
different methods.
%
%
Our objective is to show how all these results can be deduced from the
canonical representation of Theorem~\ref{teo1}.
The original contribution here is the new density formula \eqref{dmuBh}.

%
%
%

\subsection{Proof of \textup{(i)}}
The fBm $B^H$ and hence also $X$ have infinite quadratic variation for
$H\in(0,\frac{1} 2 )$. Hence, $X$ is not a semimartingale
in its own filtration and a fortiori $\mu^X$ and $\mu^W$ are
singular. For $H=\frac{1} 2$, the statement of the theorem is evident.
Below we focus on the case $H\in(\frac{1} 2,1 ]$.

\begin{rem}
The fact that $X$ is not a semimartingale for $H\in(\frac{1} 2, \frac{3}4
]$ implies singularity of $\mu^X$ and $\mu^W$, but not vise versa.
For the sake of completeness, we prove both assertions directly,
showing how they stem from the same property of the kernel $K_H$ in
\eqref{K}.
\end{rem}

\subsubsection{Equivalence for \texorpdfstring{$H\in(\frac{3}4,1]$}{Hin(3/4,1]}}\label{sec-e}

Recall that for $H>\frac{1} 2$ the second term in~\eqref{Mqv} vanishes and
\[
\langle M\rangle_t = \int_0^t
g^2(s,s)\,ds, \qquad t\in[0,T]. %
\]
By Lemma~\ref{cor}, $g(t,t)>0$ for all $t\ge0$ and the process
%
\begin{equation}
\label{WW} \overline{B}_t =\int_0^t
\frac{1} {g(s,s)} \,dM_s
\end{equation}
is a Brownian motion in filtration $\F^X_t$.
On the other hand,
\begin{eqnarray*}
M_t &=& \int_0^t
g(s,t)\,dX_s = \int_0^t
g(s,s)\,dX_s + \int_0^t
\bigl(g(r,t)-g(r,r) \bigr)\,dX_r
\\
&=&
\int_0^t g(s,s)\,dX_s + \int
_0^t \int_r^t
\dot g(r,s)\,ds\,dX_r
\\
&=& \int_0^t
g(s,s)\,dX_s +\int_0^t \int
_0^s \dot g(r,s)\,dX_r \,ds,
\end{eqnarray*}
where the last equality holds since $\dot g(\cdot,s)\in L^2([0,s])$ by
Lemma~\ref{lem-dif}.
Hence,
\[
\overline{B}_t = \int_0^t
\frac{1}{g(s,s)} \,dM_s= X_t +\int_0^t
\int_0^s \frac{\dot g(r,s)}{g(s,s)}\,dX_r
\,ds =: X_t +\int_0^t
\varphi_s(X)\,ds. %
\]
The desired claim follows by the Girsanov theorem (Theorem 7.7 in \cite
{LS1}), once we check
%
\begin{equation}
\label{verme} \int_0^T \E
\varphi_t^2(\overline{B}) \,dt<\infty\quad\mbox{and}\quad
\int_0^T \E\varphi_t^2(X)
\,dt<\infty.
\end{equation}
Since\vspace*{1pt} $\varphi_t(\cdot)$ is a linear functional of $X=B+B^H$ and $B$
and $B^H$ are independent, it is enough to check
only the latter condition. The function $L(s,t)= \frac{ \dot g(s,t)}{g(t,t)}$
satisfies~\eqref{Leq}, and hence for $H>3/4$
\begin{eqnarray*}
\E\varphi_t^2(X) &=& \E\biggl(\int_0^tL(r,t)
\,dX_r \biggr)^2
\\
&=&
\int_0^tL^2(s,t) \,ds + \int
_0^t\int_0^t
L(s,t)L(r,t)K_H(r,s)\,dr\,ds
\\
&=&
\int_0^t L(s,t) \biggl(L(s,t) + \int
_0^t L(r,t)K_H(r,s)\,dr \biggr) \,ds
\\
&=&
- \int_0^t L(s,t) K_H(s,t) \,ds
\\
&\le&
\biggl(\int_0^t L^2 (s,t)\,ds
\biggr)^{1/2} \biggl(\int_0^t
K_H^2(s,t) \,ds \biggr)^{1/2}
\\
&=& C_1
\biggl(\int_0^t L^2 (s,t)\,ds
\biggr)^{1/2} t^{2H-3/2}.
\end{eqnarray*}
Since the kernel is positive definite, multiplying \eqref{Leq} by
$L(s,t)$ and integrating gives
\[
\int_0^t L^2 (s,t)\,ds \le- \int
_0^t L(s,t) K_H(s,t) \,ds \le
C_1 \biggl(\int_0^t
L^2 (s,t)\,ds \biggr)^{1/2} t^{2H-3/2}, %
\]
and consequently
\[
\biggl(\int_0^t L^2 (s,t)\,ds
\biggr)^{1/2} \le C_1 t^{2H-3/2}. %
\]
Plugging this bound back gives
$
\E\varphi_t^2(X)\le
C_1^2 t^{4H-3}
$ and in turn
\[
\int_0^T \E\varphi_t^2(X)
\,dt\le C_1^2 \int_0^T
t^{4H-3} \,dt = C_2 T^{4H-2}, %
\]
which verifies \eqref{verme} and completes the proof.

\subsubsection{Singularity for \texorpdfstring{$H\in(\frac{1}2,\frac{3}4]$}{Hin(1/2,3/4]}}

Suppose there exists a probability measure~$\Q$, equivalent to $\P$,
under which $X$ is a Brownian motion
in its natural filtration. Since the semimartingale property is
preserved under equivalent change of
measure, the $\P$-martingale
\[
M_t = \int_0^t
g(s,t)\,dX_s, \qquad t\in[0,T], %
\]
must be a semimartingale under $\Q$. Since $X$ is assumed to be a
Brownian motion under $\Q$, this is equivalent to
saying that the process
\[
L_t:=\int_0^t g(s,t)\,d
\overline{B}_s, %
\]
with the Brownian motion $\overline{B}$, defined in \eqref{WW}, must
be a semimartingale under $\P$.

We will argue by contradiction that this is impossible for $H \le\frac
{3} 4$.
To this end, we define
\[
\psi(s,t) = -\int_s^t g(r,r) \sum
_{m=1}^{n_0-1}(-1)^m K_H^{(m)}(r,s)\,dr,
\qquad0< s< t\le T, %
\]
where $n_0$ is the least integer greater than $\frac{1}{4H-2}$.
Note that $\psi(\cdot,t)\in L^2([0,t])$ and define the processes
\begin{eqnarray*}
U_t &:=& \int_0^t \psi(s,t)\,d
\overline{B}_s,
\\
V_t &:=& \int_0^t
\bigl(g(s,t)-g(s,s) + \psi(s,t) \bigr)\,d\overline{B}_s,
\end{eqnarray*}
so that
\[
L_t = V_t + \int_0^t
g(s,s)\,d\overline{B}_s -U_t. %
\]
The second term is a martingale in filtration $\F^X_t$ and hence, in
order to argue that $L$ is not a semimartingale, it is enough to show that:
\begin{longlist}[(a)]
\item[(a)] $U$ has zero quadratic variation, but infinite variation,

\item[(b)] $V$ has bounded variation.\vadjust{\goodbreak}
\end{longlist}

\begin{pf*}{Proof of \textup{(a)}} To verify this assertion, we will need an estimate
for the variance of increments of $U$. To this end,
for any two points $t_1,t_2\in[0,T]$, such that $0<t_2-t_1<1$,
%
\begin{eqnarray}\label{dU}
\E(U_{t_2}-U_{t_1}
)^2 &=&
\E\biggl( \int_{t_1}^{t_2} \psi(s,t_2)\,d
\overline{B}_s +\int_0^{t_1} \bigl(
\psi(s,t_2)-\psi(s,t_1) \bigr)\,d\overline{B}_s
\biggr)^2
\nonumber\\[-8pt]\\[-8pt]\nonumber
&=& \int_{t_1}^{t_2} \psi^2(s,t_2)\,ds
+\int_0^{t_1} \bigl(\psi(s,t_2)-
\psi(s,t_1) \bigr)^2\,ds.
\end{eqnarray}
To bound the first term, note that
\begin{eqnarray*}
\psi^2(s,t_2) &\le&\llVert g\rrVert^2_\infty
n_0 \sum_{m=1}^{n_0-1} \biggl(
\int_s^{t_2} K_H^{(m)}(s,r)\,dr
\biggr)^2
\\
&\le&
C_1 \sum_{m=1}^{n_0-1}
(t_2-s)^{(4H-2)m} \le C_2 (t_2-s)^{4H-2},
\end{eqnarray*}
where $\llVert g\rrVert_\infty=\sup_{r\le T}\llvert
g(r,r)\rrvert<\infty$,
and consequently
\[
\int_{t_1}^{t_2} \psi^2(s,t_2)\,ds
\le C_3 (t_2-t_1)^{4H-1}.
\]
For the second term, we have
%
\begin{eqnarray}\label{manyterms}
&& \int_0^{t_1}
\bigl(\psi(s,t_2)-\psi(s,t_1) \bigr)^2\,ds\nonumber
\\
&&\qquad =
\int_0^{t_1} \Biggl(\sum
_{m=1}^{n_0-1}\int_{t_1}^{t_2}
(-1)^m g(r,r) K_H^{(m)}(s,r)\,dr
\Biggr)^2\,ds
\nonumber\\[-8pt]\\[-8pt]\nonumber
&&\qquad =
\sum_{m=1}^{n_0-1}\sum
_{\ell=1}^{n_0-1} \int_0^{t_1}
\int_{t_1}^{t_2} \int_{t_1}^{t_2}(-1)^{m+\ell}
g(r,r)g(\tau,\tau)
\\
&&\quad\qquad{}\times  K_H^{(m)}(s,r)K_H^{(\ell)}(s,
\tau)\,dr\,d\tau \,ds.\nonumber
\end{eqnarray}
The dominating term in this sum corresponds to $m=1$, $\ell=1$:
\[
\int_0^{t_1} \biggl( \int_{t_1}^{t_2}
g(r,r) K_H(r,s) \,dr \biggr)^2\,ds. %
\]
We have
%
\begin{eqnarray}\label{domterm}
&& \int_0^{t_1}\biggl( \int_{t_1}^{t_2} K_H(r,s)\,dr
\biggr)^2\,ds \nonumber
\\
&&\qquad =
H^2\int_0^{t_1} \bigl(
(t_2-t_1+s)^{2H-1}-s^{2H-1}
\bigr)^2\,ds
\\
&&\qquad =
H^2(t_2-t_1)^{4H-1}\int
_0^{\afrac{t_1}{t_2-t_1}} \bigl( (1+u)^{2H-1}-u^{2H-1}
\bigr)^2\,du.\nonumber
\end{eqnarray}
The increasing function
\[
\gamma(y):= H^2 \int_0^{y} \bigl(
(1+u)^{2H-1}-u^{2H-1} \bigr)^2\,du, \qquad y\ge0
\]
satisfies
\begin{eqnarray*}
\lim_{y\to\infty}\gamma(y) &=& \gamma_H,\qquad  H\in
\bigl(\tfrac{1}2 , \tfrac{3} 4 \bigr),
\\
\lim_{y\to\infty}\frac{\gamma(y)}{\log y} &=& \gamma_{3/4} ,\qquad
 H=\tfrac{3} 4,
\end{eqnarray*}
with positive constants $\gamma_H$. The function $r\mapsto g(r,r)$ is
positive and continuous on $[0,T]$, and hence
\[
c_4\le\int_0^{t_1} \biggl( \int
_{t_1}^{t_2} g(r,r) K_H (s,r) \,dr
\biggr)^2 \Big/ (t_2-t_1)^{4H-1}\gamma
\biggl(\frac{t_1}{t_2-t_1} \biggr) \le C_4 %
\]
with some positive constants $c_4$, $C_4$ for all sufficiently small $t_2-t_1$.
A similar calculation shows that the rest of the terms in \eqref
{manyterms} converge to zero as $t_2-t_1\to0$ at a faster rate
and assembling all parts together, we obtain
%
\begin{equation}
\label{cC} c_5\le\E(U_{t_2}-U_{t_1}
)^2 /(t_2-t_1)^{4H-1}\gamma\biggl(
\frac{t_1}{t_2-t_1} \biggr)\le C_5.
\end{equation}

Now let $0=t_0<t_1<\cdots<t_n=T$ be an arbitrary partition, then for
all $H\in(\frac{1} 2, \frac{3} 4]$
\begin{eqnarray*}
\E\sum_{i=1}^n (U_{t_i}-U_{t_{i-1}}
)^2 &\le& C_5 \sum_{i=1}^n
(t_i-t_{i-1})^{4H-1}\gamma\biggl(
\frac{T}{t_i-t_{i-1}} \biggr)
\\
&\le&
C_6 \max_{i}(t_i-t_{i-1})^{4H-2}
\log\frac{1}{t_i-t_{i-1}}\xrightarrow{n\to\infty} 0,
\end{eqnarray*}
that is, $U$ has zero quadratic variation.

On the other hand, since the process $U$ is Gaussian
\begin{eqnarray*}
\E\sum_{i=1}^n \llvert
U_{t_i}-U_{t_{i-1}}\rrvert&\ge&\sqrt{\frac{2}\pi}c_5 \sum_{i:t_i\ge T/2} (t_i-t_{i-1})^{2H-\sfrac{1}2}
\gamma^{\sfrac{1}2} \biggl(\frac{T/2}{t_i-t_{i-1}} \biggr)
\\
&\ge&
c_6 \min_i (t_i-t_{i-1})^{2H-\sfrac{3} 2}
\gamma^{1/2} \biggl(\frac
{T/2}{t_i-t_{i-1}} \biggr) \xrightarrow{n\to\infty}
\infty,
\end{eqnarray*}
which implies that $U$ has infinite variation (see, e.g., Theorem 4,
Chapter~4, Section~9 in \cite{LS89}).
\end{pf*}

\begin{pf*}{Proof of \textup{(b)}}
For $0< s< t\le T$
\[
\dot\psi(s,t) := \frac{\partial}{\partial t}\psi(s,t) = - g(t,t) \sum
_{m=1}^{n_0-1}(-1)^mK_H^{(m)}(s,t)
\]
and hence
\begin{eqnarray*}
&& \int_0^t \dot\psi(s,t)
K_H(r,s) \,dr
\\
&&\qquad = -\int_0^t \Biggl(
g(t,t) \sum_{m=1}^{n_0-1}(-1)^mK_H^{(m)}(s,t)
\Biggr)K_H(r,s) \,dr
\\
&&\qquad =
-g(t,t) \sum_{m=1}^{n_0-1}(-1)^mK_H^{(m+1)}(s,t)
= g(t,t) \sum_{m=2}^{n_0}(-1)^{m}K_H^{(m)}(s,t)
\\
&&\qquad =
g(t,t) K_H(s,t) -\dot\psi(s,t)+(-1)^{n_0}g(t,t)
K_H^{(n_0)}(s,t).
\end{eqnarray*}
Adding this expression to the equation for $\dot g(s,t)$ [see \eqref
{WHdot}], we get
\begin{eqnarray*}
&& \bigl(\dot g(s,t)+\dot\psi(s,t) \bigr) + \int_0^t
\bigl(\dot g(r,t)+\dot\psi(r,t) \bigr) K_H(r,s)\,dr
\\
&&\qquad =
(-1)^{n_0} g(t,t) K_H^{(n_0)}(s,t).
\end{eqnarray*}
By the choice of $n_0$, the right-hand side is square integrable and so
is the function $\dot g(s,t)+\dot\psi(s,t)$, $s\in(0,t)$.
Since $\psi(s,s)=0$,
\begin{eqnarray*}
V_t &=& \int_0^t
\bigl(g(s,t)-g(s,s) + \psi(s,t) \bigr)\,d\overline{B}_s
\\
&=& \int
_0^t\int_s^t
\bigl( \dot g(s,r) + \dot\psi(s,r) \bigr)\,dr\,d\overline{B}_s
\\
&=&
\int_0^t\int_0^r
\bigl( \dot g(s,r) + \dot\psi(s,r) \bigr)\,d\overline{B}_s\,dr,
\end{eqnarray*}
and hence $V$ has bounded variation. 
\end{pf*}

\subsubsection{$X$ is not a semimartingale for \texorpdfstring{$H\in(\frac{1}2, \frac{3}4]$}{Hin(1/2,3/4]}}\label{sec:notsem}

We will use the representation \eqref{tildeg}, where, for $H>\frac{1} 2$,
\[
\hat g(s,t) = 1-\frac{1}{g(s,s)}\int_0^t
L(r,s)\,dr. %
\]
We have
\begin{eqnarray*}
X_t &=& M_t - \int_0^t
\frac{1}{g(s,s)} \int_0^t L(\tau,s) \,d\tau
\,dM_s
\\
&=& M_t - \int_0^t
\int_0^t L(\tau,s) \,d\tau \,d
\overline{B}_s
\\
&=&
M_t - \int_0^t \int
_0^s L(\tau,s) \,d\tau \,d\overline{B}_s
- \int_0^t \int_s^t
L(\tau,s) \,d\tau \,d\overline{B}_s =: M_t - N_t
- U_t,
\end{eqnarray*}
where $\overline{B}$ is the Brownian motion, defined by \eqref{WW}.
By Lemma~\ref{lem-sing}, the function $\int_0^s L(\tau,s) \,d\tau$ is
bounded, and hence $M-N$ is a martingale in filtration $\F^X_t$.
To argue that $X$ is not a semimartingale in its own filtration, we
will show that
$U$ has zero quadratic variation, but infinite variation.

Let $n_0$ be the least integer greater than $\frac{1} {4H-2}$.
It then follows from \eqref{Leq} that the function
\[
Q(s,t) := \int_0^t L(r,t)K_H^{(n_0-1)}(r,s)\,dr
\]
satisfies
\[
Q(s,t) + \int_0^t Q(r,t)
K_H(r,s)\,dr = - K_H^{(n_0)}(s,t), %
\]
and hence $Q(\cdot,t)\in L^2([0,t])$. Iterating the equation \eqref
{Leq}, we get
%
\begin{equation}
\label{RQrep} L(s,t) = \sum_{m=1}^{n_0-1}
(-1)^m K_H^{(m)}(s,t) + (-1)^{(n_0-1)}
Q(s,t).
\end{equation}
Define $\phi(s,t):= \int_s^t L(\tau,s) \,d\tau$, then, similar to
\eqref{dU},
%
\begin{equation}
\label{UU} \E(U_{t_2}-U_{t_1} )^2 = \int
_{t_1}^{t_2} \phi^2(s,t_2)\,ds
+\int_0^{t_1} \bigl(\phi(s,t_2)-
\phi(s,t_1) \bigr)^2\,ds.
\end{equation}
By \eqref{RQrep},
\begin{eqnarray*}
\phi^2(s,t) &\le& C_1 \sum_{m=1}^{n_0-1}
\biggl(\int_s^t K_H^{(m)}(
\tau,s) \,d\tau\biggr)^2 +
C_1 \biggl(\int_s^t Q(\tau,s) \,d
\tau\biggr)^2
\\
&\le& C_2 \llvert t-s\rrvert^{4H-2}
\end{eqnarray*}
and hence the first term in \eqref{UU} is bounded by
\[
\int_{t_1}^{t_2} \phi^2(s,t_2)\,ds
\le\int_{t_1}^{t_2} C_2
(t_2-s)^{4H-2}\,ds \le C_3 (t_2-t_1)^{4H-1}.
\]
Further,
\begin{eqnarray*}
\int_0^{t_1} \bigl(\phi(s,t_2)-
\phi(s,t_1) \bigr)^2\,ds &=& \int_0^{t_1}
\biggl(\int_{t_1}^{t_2} L(\tau,s) \,d\tau
\biggr)^2\,ds
\\
&=&
\int_0^{t_1} \int_{t_1}^{t_2}
\int_{t_1}^{t_2} L(\tau,s)L(r,s) \,d\tau \,dr \,ds.
\end{eqnarray*}
After plugging in the expression \eqref{RQrep}, the dominating term is
readily seen to be given by \eqref{domterm},
and hence as in the previous section the bound \eqref{cC} holds.
Consequently, $U$ has infinite variation and zero quadratic variation and
thus $X$ is not a semimartingale.

\subsection{Proof of \textup{(ii)}}

\subsubsection{Equivalence for \texorpdfstring{$H<\frac{1}4$}{H<1/4}}
By Lemma~\ref{mart-brak},
\begin{eqnarray*}
M_t &= & \int_0^t p(s,t) \,d
\widetilde X_s = \int_0^t p(s,s) \,d
\widetilde X_s + \int_0^t \int
_s^t \dot p(s,r)\,dr \,d\widetilde X_s
\\
&=&
 \int_0^t p(s,s) \,d\widetilde X_s + \int_0^t \int
_0^r \dot p(s,r) \,d\widetilde X_s
\,dr.
\end{eqnarray*}
The last equality holds since $\dot p(\cdot,t)\in L^2([0,t])$ for
$H<\frac{1} 4$ by
Lemma~\ref{lem-dif}. Hence,
\[
\widetilde X_t = \overline{W}_t - \int
_0^t \tilde\varphi_s(
\widetilde X) \,ds,
\]
where
$
\overline{W}_t = \int_0^t \frac{dM_s}{p(s,s)}
$
is a Brownian motion in filtration $\F^{\widetilde X}_t$ and
\begin{eqnarray*}
\tilde\varphi_t(\widetilde X)&= & \int_0^t
\widetilde L(s,t) \,d\widetilde X_s= \int_0^t
\sqrt{\frac{2-2H}{\lambda_H}} \bigl(\Psi^{-1}u^{H-1/2}\widetilde L(u,t) \bigr) (s,t) \,d X_s
\\
&=:&
\int_0^t L(s,t) \,d X_s =:
\varphi_t(X),
\end{eqnarray*}
with
$
\widetilde L(s,t):=\frac{\dot p(s,t)}{p(t,t)}$.
A calculation shows that
%
\begin{equation}
\label{subsme} L(s,t) = \frac{\dot g(s,t)}{p(t,t)}-\dot{\tilde\rho}(s,t).
\end{equation}
Since the kernel in \eqref{kappabar} is weakly singular, by Lemma~\ref
{lem-dif}, the solution $g(s,t)$ of \eqref{WH_smallH}
is differentiable with respect to the second variable. Taking the
derivative of~\eqref{f_i_e}, we obtain
\[
c_H (\Phi\dot g) (s) + \frac{2-2H}{\lambda_H} (\Psi\dot g) (s,t)
s^{1-2H} =0, \qquad0<s< t\le T, %
\]
since $g(t,t)=0$ for $H<\frac{1} 2$. Multiplying this equation by $\frac
{\lambda_H}{2-2H}s^{2H-1}$
and applying $\Psi^{-1}$, it can be seen that $\dot{g}(s,t)$
satisfies [cf. \eqref{WHdot}]
%
\begin{equation}
\label{dotgeq} \dot{g}(s,t) + \beta_Ht^{-2H}\int
_0^t\dot{ g}(r,t)K^H \biggl(
\frac
{r}{t},\frac{s}{t} \biggr) \,dr=p(t,t)\dot{\tilde{\rho}}(s,t),
\end{equation}
and plugging \eqref{subsme} into this equation yields
\[
L(s,t) + \beta_Ht^{-2H}\int_0^t
\bigl(L(r,t) + \dot{\tilde\rho}(r,t) \bigr) K^H \biggl(
\frac{r}{t},\frac{s}{t} \biggr) \,dr=0. %
\]
After applying $\Psi$ and rearranging the terms, it becomes
\[
L(s,t) + \frac{\partial}{\partial s}\int_0^t L(r,t)
\frac{\partial
}{\partial r} R(r,s) \,dr = - \dot{\tilde\rho}(s,t). %
\]
Assembling all the parts together, we get
\[
\widetilde X_t = \overline{W}_t - \int
_0^t \varphi_s( X) \,ds,
\]
and, consequently, the representation \eqref{BHt}:
\[
X_t = \int_0^t \rho(s,t) \,d
\overline{W}_t - \int_0^t
\rho(s,t) \varphi_s( X) \,ds=: \overline{B}^H_t
- \int_0^t \rho(s,t) \varphi_s(
X) \,ds. %
\]
The density \eqref{dmuBh} is obtained by Girsanov's change of measure
as in Section~\ref{sec-e}, under which $\widetilde X$ is a Brownian motion
and, therefore, $X$ is an fBm.

\subsubsection{Singularity for \texorpdfstring{$H\ge\frac{1}4$}{H>=1/4}}
The claim is obvious for $H=\frac{1} 2$.
For $H> \frac{1} 2$, the process $X$ has positive quadratic variation,
and hence cannot be equivalent to fBm with $H> \frac{1} 2$, whose
quadratic variation vanishes.

To prove singularity for $H\in[\frac{1} 4, \frac{1} 2)$, suppose there
is a probability $\Q$, equivalent to $\P$, under which $X$ is an fBm
with the
Hurst exponent $H$ in its own filtration. Then $\widetilde X_t = \int
_0^t \tilde\rho(s,t)\,dX_s$, with $\tilde\rho(s,t)$ defined in
\eqref{rhorho}, is a Brownian motion under $\Q$. By calculations as
in Section~\ref{sec:notsem}, it can be seen that $\widetilde X$ is not
a semimartingale for $H\in[\frac{1} 4, \frac{1} 2)$, thus obtaining a
contradiction.

\section{Proof of Corollaries \texorpdfstring{\protect\ref{teo4}}{2.9} and \texorpdfstring{\protect\ref{teo4b}}{2.10}}\label{sec-thm4}

The proofs of Corollaries~\ref{teo4} and~\ref{teo4b} follow the same
pattern and we will omit the details for the latter.
The representation \eqref{reprZ} is obvious in view of \eqref{MX} and
the definition
\eqref{Kr2}. To prove the inversion formula \eqref{repYG}, we have to
check that
%
\begin{equation}
\label{G_F} \int_0^t \xi_s\,ds=
\int_{0}^{t}\hat g(s,t)\Xi(s) \,d\langle M
\rangle_{s},\qquad t\in[0,T].
\end{equation}
Since this is a pathwise statement and $\xi$ is the only random
object, no generality will be lost if
$\xi_t$ is assumed to be deterministic. For $\xi\in L^2([0,t])$, we have
\begin{eqnarray*}
\E\biggl(\int_{0}^{t}\xi_s
\,dB_s\Big| \F^X_t \biggr) &=& \E\biggl(
\int_{0}^{t}\xi_s \,dB_s
\Big|  \F^M_t \biggr)
\\
&=&
\int_0^t \frac{d}{d\langle M\rangle_s} \biggl(\E
M_s \int_0^t \xi_r
\,dB_r \biggr) \,dM_s
\\
& =&
\int_0^t \frac{d}{d\langle M\rangle_s} \biggl(\E\int
_0^s g(r,s)\,dX_r \int
_0^t \xi_r \,dB_r
\biggr) \,dM_s
\\
&=& \int_{0}^{t}\Xi(s) \,d
M_s,
\end{eqnarray*}
and, using the representation \eqref{XG}, we obtain \eqref{G_F}:
\[
\int_{0}^{t}\xi_s \,ds=\E
X_t \int_{0}^{t}\xi_s
\,dB_s= \E X_t \int_{0}^{t}
\Xi(s) \,d M_s= \int_{0}^{t}\hat g(s,t)
\Xi(s) \,d\langle M \rangle_s. %
\]

The formula \eqref{RNflaXY} follows from
Theorem 7.13 in \cite{LS1}, once we check
%
\begin{equation}
\label{checkme1} \int_0^T
\Xi^{2}(t) \,d\langle M\rangle_{t} <\infty, \qquad\P\mbox{-a.s.}
\end{equation}
and
%
\begin{equation}
\label{checkme2} \E\int_0^T \bigl\llvert
\Xi(t)\bigr\rrvert \,d\langle M\rangle_{t} <\infty.
\end{equation}
Let us first consider the case $H>\frac{1} 2$, for which $d\langle M
\rangle_t/dt=g^2(t,t)>0$.
By definition \eqref{Kr2} and continuity of $\xi_t$
\[
\Xi(t)g(t,t)= \xi_t + \int_0^{t}L(s,t)
\xi_s \,ds, %
\]
where $L(s,t)$ solves \eqref{Leq}. By (ii) of Lemma~\ref{lem-dif},
$\llvert L(s,\tau)\rrvert\le c_1\llvert s-\tau\rrvert
^{2H-2}$ with a constant $c_1$ and,
therefore,
\begin{eqnarray*}
\biggl\llvert\int_0^{\tau}L(s,\tau)
\xi_s \,ds\biggr\rrvert&\le&\biggl(\int_0^{\tau}
\bigl\llvert L(s,\tau)\bigr\rrvert\xi_s^2 \,ds
\biggr)^{1/2} \biggl(\int_0^{\tau}\bigl
\llvert L(s,\tau)\bigr\rrvert \,ds \biggr)^{1/2}
\\
&\le&
c_2 \biggl(\int_0^{T}\bigl
\llvert L(s,\tau)\bigr\rrvert\xi^2_s \,ds
\biggr)^{1/2},
\end{eqnarray*}
where $ c^2_2=c_1 \sup_{\tau\in[0,T]}\int_0^{T}\llvert s-\tau
\rrvert^{2H-2} \,ds$. Consequently,
\begin{eqnarray*}
\int_0^T \Xi^{2}(t)\,d\langle M
\rangle_t &\le&2 \int_0^T \xi
^2_t\,dt+ 2\int_0^T
\biggl(\int_0^{t}L(s,t)\xi_s \,ds
\biggr)^2 \,dt
\\
&\le&
2 \int_0^T \xi^2_s
\,ds+ 2c_2^2 \int_0^T
\xi^2_s \int_0^{T}
\bigl\llvert L(s,t)\bigr\rrvert \,dt \,ds
\\
&\le& 2\bigl(1 + c_2^4
\bigr) \int_0^T \xi_t^2
\,dt <\infty,
\end{eqnarray*}
which proves \eqref{checkme1}.
Condition \eqref{checkme2} is verified similarly:
\begin{eqnarray*}
\E\int_0^T \bigl\llvert\Xi(t)\bigr\rrvert d
\langle M\rangle_{t} &\le& c_3 \E\int_0^T
\llvert\xi_t\rrvert \,dt + c_3 \E\int
_0^T\llvert\xi_s\rrvert\int
_0^{T}\bigl\llvert L(s,t)\bigr\rrvert \,ds \,dt
\\
&\le&
c_3 \bigl(1+c_2^2 \bigr)\E\int
_0^T \llvert\xi_t\rrvert \,dt <
\infty,
\end{eqnarray*}
where $c_3:=\sup_{t\in[0,T]}g(t,t)$.

For $H<\frac{1} 2$, by Lemma~\ref{mart-brak}, $d\langle M\rangle
_t/dt=p^2(t,t)>0$
and, therefore,
\[
\Xi(t)p(t,t)= \xi_t + \int_0^{t}
\frac{\dot g(s,t)}{p(t,t)}\xi_s \,ds. %
\]
Dividing both sides of equation \eqref{dotgeq} by $p(t,t)$, we see
that $H(s,t):=\dot g(s,t)/p(t,t)$ solves the equation
\[
H(s,t) + \beta_Ht^{-2H}\int_0^tH(r,t)K^H
\biggl(\frac{r}{t},\frac
{s}{t} \biggr) \,dr= \dot{\tilde{\rho}}(s,t), %
\]
where\vspace*{1pt} $\llvert K^H(s,t)\rrvert \le c_4 \llvert s-t\rrvert^{-2H}$
and $\llvert\dot{\tilde{\rho
}}(s,t)\rrvert\le c_5\llvert s-t\rrvert^{-\sfrac{1} 2 -H}$.
Therefore, by Lemma~\ref{lem-sing}, $\llvert H(s,t)\rrvert\le
c_6 \llvert s-t\rrvert^{-\sfrac{1} 2-H}$ and the\vspace*{2pt} claim follows by the same arguments as in the case
$H<\frac{1} 2$.

\section{The mixed Riemann--Liouville process}\label{sec-RL}

In this section, we outline the results, obtained by our method, for
the mixed Riemann--Liouville process:
\[
X_t = B_t + V^H_t, \qquad t
\in[0,T], %
\]
where $V^H$ is defined in \eqref{RLproc}.

As mentioned in the \hyperref[sec1]{Introduction}, $V^H$ shares many properties with
$B^H$. In particular, the respective
stochastic calculus builds on operators similar to those defined in
\eqref{Psi}--\eqref{invQ}.
In this case, they are defined in a slightly different way:
\begin{eqnarray*}
(\Psi f) (s,t) &=& -2H\frac{d}{ds}\int_s^t
f(r) (r-s)^{H-\sfrac{1}2} \,dr ,\qquad0\leq s\leq t ,
\\
(\Phi f) (s) &=& c_H\frac{d}{ds}\int_0^s
f(r) (s-r)^{\sfrac{1}2-H} \,dr,
\end{eqnarray*}
and
\begin{eqnarray*}
\bigl(\Psi^{-1} g\bigr) (s,t) &=& -c_H \frac{d}{ds}
\int_s^t (r-s)^{1/2-H} g(r)\,dr,
\\
\bigl(\Phi^{-1} g\bigr) (s) &=& \frac{2H} {c_H}\frac{d}{ds}
\int_0^s g(r) (s-r)^{H-1/2}\,dr.
\end{eqnarray*}
Stochastic integrals with respect to $V^H$ can be defined on the space
\[
\Lambda_t : = \biggl\{f: [0,t]\mapsto\Real\mbox{ such that }
\int_0^t (\Psi f)^2(s,t) \,ds <
\infty\biggr\}, %
\]
with the scalar product
\[
\langle f,g\rangle_{\Lambda_t} = \int_0^t
(\Psi f) (s) (\Psi g) (s)\,ds. %
\]
The formula \eqref{inner_prod} remains valid and
the kernels $\rho$ and $\tilde\rho$ become [cf. \eqref{rhorho}]
\[
\rho(s,t) = (\Psi1) (s,t), \qquad\tilde{\rho}(s,t) = \bigl(\Psi
^{-1} 1 \bigr) (s,t), %
\]
so that
\[
V^H_t = \int_0^t
\rho(s,t)\,dW_s, %
\]
where
\[
W_t = \int_0^t \tilde{\rho}(s,t)\,dV^H_t, %
\]
is a Brownian motion with $\F^W_t=\F^{V^H}_t$.
As before, we have
\[
\E\int_0^t f(u)\,dV^H_u
\int_0^t f(r)\,dV^H_r =
\langle f,g\rangle_{\Lambda_t}. %
\]
For $H>\frac{1} 2$, the covariance function of $V^H$
%
\begin{equation}
\label{covfRL} R(s,t)=\E V^H_tV^H_s
= (2H)^{2} \int_0^{s\wedge t}(t-r)^{H-1/2}(s-r)^{H-1/2}\,dr,
\end{equation}
satisfies
%
\begin{equation}
\label{KHRL} \qquad K_H(s,t):=\frac{\partial^{2}R(s,t)}{\partial t\,\partial s}
=H^2(2H-1)^2
\llvert t-s\rrvert^{2H-2}\chi\biggl(\frac
{s}{t} \biggr),\qquad s
\le t
\end{equation}
with $\chi\in C([0,1])$ given by
\[
\chi(u)=\int_{0}^{\afrac{u}{1-u}}\tau^{H-3/2}(1+
\tau)^{H-3/2} \,d\tau. %
\]

Repeating the proofs with these modifications gives the following
analogs of the main results.

\begin{teo}
\textup{(i)}~Theorem~\ref{teo1} remains valid with $g(s,t)$ solving the equation
%
\begin{eqnarray}\label{ideqRL}
g(s,t) - \frac{\partial}{\partial s}\int_0^t
R(r,s)\frac{\partial}{\partial r} g(r,t) \,dr+g(t,t) \frac{\partial
}{\partial s}R(s,t)=1,
\nonumber\\[-8pt]\\[-8pt]
\eqntext{0<s,t \le T,}
\end{eqnarray}
where $R(s,t)$ is defined in \eqref{covfRL}, and \eqref{Mqv} is
replaced with
\[
\frac{d}{dt} \langle M \rangle_t = g^2(t,t) + (
\Psi g)^2(t,t) > 0, \qquad t\in[0,T]. %
\]

\textup{(ii)}~Theorem~\ref{teo3} remains valid with $\overline{B}^H$ being
replaced with the Riemann--Liouville process $\overline{V}^H$.

\textup{(iii)}~Corollary~\ref{teo4} remains valid.
\end{teo}

Note that equation \eqref{ideqRL} can be obtained formally from
\eqref{ideq} through integration by parts. The reason for such
a twist is that the first derivative $ \partial R(s,t)/\partial s$ of
the covariance function $R(s,t)$
is not integrable for $H<\frac{1} 2$.
Let us note that \eqref{ideqRL} also reduces to a weakly singular
integral equation with the kernel $K_H$ from \eqref{KHRL} for $H>\frac
{1} 2$ and the
kernel
\[
K^H(u,v)= \llvert u-v\rrvert^{-2H}\int
_0^{\vfrac{1-u}{\llvert
u-v\rrvert}}\tau^{-1/2-H}(1+\tau
)^{-1/2-H} \,d\tau%
\]
for $H<\frac{1} 2$ (cf. Theorem~\ref{teo-eq}).


\section*{Acknowledgement} We would like to thank Alain Le Breton
for enlightening discussions and his interest in this work.


%

\printaddresses
\end{document}